\renewcommand{\@seccntformat}[1]{{\csname the#1\endcsname}.\hspace{.5em}}
\newtheorem{thm}{Theorem}[section]
\newtheorem{prop}[thm]{Proposition}
\newtheorem{cor}[thm]{Corollary}
\newtheorem{conj}[thm]{Conjecture}
\newtheorem{lem}[thm]{Lemma}
\numberwithin{equation}{section}
\begin{document}

\renewcommand{\thefootnote}{*}

\begin{center}
{\Large\bf Some $q$-congruences with parameters}
\end{center}

\vskip 2mm \centerline{Victor J. W. Guo  }
\begin{center}
{\footnotesize School of Mathematical Sciences, Huaiyin Normal University, Huai'an 223300, Jiangsu,\\
 People's Republic of China\\
{\tt jwguo@hytc.edu.cn } }
\end{center}

\vskip 0.7cm \noindent{\small{\bf Abstract.}} Let $\Phi_n(q)$ be the $n$-th cyclotomic polynomial in $q$. Recently, the author and Zudilin devised a method, called
`creative microscoping', to prove some $q$-supercongruences mainly modulo $\Phi_n(q)^3$ by introducing an additional parameter $a$.
In this paper, we use this method to confirm some conjectures on $q$-supercongruences modulo $\Phi_n(q)^2$.
We also give some parameter-generalizations of known $q$-supercongruences. For instance, we present
further generalizations of a $q$-analogue of a famous supercongruence of Rodriguez-Villegas:
$$
\sum_{k=0}^{p-1}\frac{{2k\choose k}^2}{16^k} \equiv (-1)^{(p-1)/2}\pmod{p^2}\quad\text{for any odd prime $p$.}
$$

\vskip 3mm \noindent {\it Keywords}: $q$-binomial coefficients; $q$-congruences; least non-negative residue; cyclotomic polynomials

\vskip 3mm \noindent {\it 2000 Mathematics Subject Classifications}: 11B65, 11F33, 33D15

\section{Introduction}
It is well known that Ramanujan's and Ramanujan-type
formulae \cite{Ramanujan} for $1/\pi$, such as
\begin{equation}
\sum_{n=0}^\infty\frac{\binom{4n}{2n}{\binom{2n}{n}}^2}{2^{8n}3^{2n}}\,(8n+1)
=\frac{2\sqrt{3}}{\pi},
\label{ram1}
\end{equation}
may lead to Ramanujan-type supercongruences (see Van Hamme \cite{Hamme} and Zudilin \cite{Zudilin}).
In the example \eqref{ram1}, the result reads
\begin{equation}
\sum_{k=0}^{p-1}\frac{\binom{4k}{2k}{\binom{2k}{k}}^2}{2^{8k}3^{2k}}\,(8k+1)
\equiv p\left(\frac{-3}p\right)\pmod{p^3}
\quad\text{for $p>3$ prime},
\label{ram1a}
\end{equation}
where $\bigl(\frac{-3}{\cdot}\bigr)$ denotes the Jacobi--Kronecker symbol.

Recently, the author and Zudilin \cite{GuoZu} proved a $q$-analogue of \eqref{ram1a} by using a method referred to as {\it creative microscoping}. Specifically, to prove the following
$q$-analogue of \eqref{ram1a}:
\begin{align}
\sum_{k=0}^{n-1}\frac{(q;q^2)_k^2 (q;q^2)_{2k}}{(q^2;q^2)_{2k}(q^6;q^6)_k^2}[8k+1]q^{2k^2}
&\equiv q^{-(n-1)/2}[n]\left(\frac{-3}{n}\right) \pmod{\Phi_n(q)^3},
\label{q4a}
\end{align}
where $n$ is coprime with $6$,
we first establish the following $q$-congruence with an extra parameter $a$:
\begin{align}
&\sum_{k=0}^{n-1}\frac{(aq;q^2)_k (q/a;q^2)_k (q;q^2)_{2k}}{(q^2;q^2)_{2k}(aq^6;q^6)_k (q^6/a;q^6)_k }[8k+1]q^{2k^2} \notag\\
&\quad\equiv q^{-(n-1)/2}[n]\left(\frac{-3}{n}\right) \pmod{\Phi_n(q)(1-aq^n)(a-q^n)}.
\label{q4a-new}
\end{align}
Here and throughout the paper, the {\it $q$-shifted factorial} is defined by $(a;q)_n=(1-a)(1-aq)\cdots (1-aq^{n-1})$ for $n\geqslant 1$ and $(a;q)_0=1$,
while the {\it $q$-integer} is defined as $[n]=[n]_q=1+q+\cdots+q^{n-1}$. Since the polynomials $[n]$, $1-aq^n$, and $a-q^n$ are relatively prime,
the $q$-congruence \eqref{q4a-new} can be established modulo these three polynomials individually. It is easy to see that we recover \eqref{q4a} from \eqref{q4a-new}
by taking the limit $a\to 1$. It is worth mentioning that the creative microscoping method has caught the interests of Guillera \cite{Guillera3} and Straub \cite{Straub}.

In this paper, we shall prove some $q$-congruences modulo $\Phi_n(q)^2$ by the creative microscoping method. Some of our results confirm the corresponding conjectures in \cite{GPZ,GZ14,GZ15,GuoZu}.

For any rational number $x$ and positive integer $m$ such that the denominator of $x$ is relatively prime to $m$, we let $\langle x\rangle_m$ denote the {\it least non-negative residue} of $x$ modulo $m$.
Recall that the $n$-th {\it cyclotomic polynomial} $\Phi_n(q)$ is defined as
\begin{align*}
\Phi_n(q):=\prod_{\substack{1\leqslant k\leqslant n\\ \gcd(n,k)=1}}(q-\zeta^k),
\end{align*}
where $\zeta$ is an $n$-th primitive root of unity.
Our first result can be stated as follows.
\begin{thm}\label{thm:dnr-rs}
Let $d$, $n$ and $r$ be positive integers with $\gcd(d,n)=1$ and $n$ odd.
Let $s\leqslant n-1$ be a nonnegative integer with $s\equiv \langle -r/d\rangle_n+1\pmod 2$. Then
\begin{align}
\sum_{k=s}^{n-1}\frac{(aq^r;q^d)_k (bq^{d-r};q^d)_k (q^d;q^{2d})_k q^{dk}}{(q^d;q^d)_{k-s}(q^d;q^d)_{k+s} (abq^{2d};q^{2d})_k}
\equiv 0 \pmod{(1-aq^{r+d\langle -r/d\rangle_n})(1-bq^{d-r+d\langle (r-d)/d\rangle_n})}.  \label{eq:eq:conj6.1-ab-gen}
\end{align}
\end{thm}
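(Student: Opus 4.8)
\medskip
\noindent The plan is to apply the creative microscoping method. Note first that, up to a unit, $1-aq^{r+d\langle -r/d\rangle_n}$ is a linear polynomial in $a$ free of $b$, while $1-bq^{d-r+d\langle (r-d)/d\rangle_n}$ is a linear polynomial in $b$ free of $a$; hence these two polynomials are relatively prime, and \eqref{eq:eq:conj6.1-ab-gen} may be proved modulo each of them separately. Moreover, since $n$ is odd and
$$\langle -r/d\rangle_n+\langle (r-d)/d\rangle_n=n-1,$$
the residues $\langle -r/d\rangle_n$ and $\langle (r-d)/d\rangle_n$ have the same parity, so the substitution $(a,b,r)\mapsto(b,a,d-r)$ leaves the summand of \eqref{eq:eq:conj6.1-ab-gen} unchanged, swaps the two modulus factors, and preserves the hypothesis $s\equiv\langle -r/d\rangle_n+1\pmod{2}$. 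Thus it is enough to prove that the left-hand side of \eqref{eq:eq:conj6.1-ab-gen} vanishes after the substitution $a=q^{-r-d\langle -r/d\rangle_n}$, the denominators $(abq^{2d};q^{2d})_k$ being nonzero there for generic $b$.

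Put $m=\langle -r/d\rangle_n$, so $0\leqslant m\leqslant n-1$. After setting $a=q^{-r-dm}$ the factor $(aq^r;q^d)_k=(q^{-dm};q^d)_k$ vanishes for $k>m$, so the sum collapses to a sum over $s\leqslant k\leqslant m$. If $s>m$ this sum is empty and we are done; otherwise $s\equiv m+1\pmod{2}$ forces $s<m$ with $m-s$ odd, and it remains to establish the finite identity
$$\sum_{k=s}^{m}\frac{(q^{-dm};q^d)_k\,(bq^{d-r};q^d)_k\,(q^d;q^{2d})_k\,q^{dk}}{(q^d;q^d)_{k-s}\,(q^d;q^d)_{k+s}\,(bq^{2d-r-dm};q^{2d})_k}=0$$
as a rational identity in $b$ and $q$. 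Here I would reindex $k=j+s$, factor out the $j$-independent part, and use
$$\frac{(q^d;q^{2d})_k}{(q^d;q^d)_{k-s}(q^d;q^d)_{k+s}}=\frac{1}{(q^{2d};q^{2d})_k}\cdot\frac{(q^d;q^d)_{2k}}{(q^d;q^d)_{k-s}(q^d;q^d)_{k+s}}$$
to rewrite the inner sum as a terminating basic hypergeometric series in $q^{dj}$ whose upper parameters include the truncating entry $q^{-d(m-s)}$; the parity condition $s\equiv m+1\pmod{2}$ is exactly what makes the base-$q^{2d}$ entry $(q^d;q^{2d})_k$ compatible with this truncation. One then evaluates the resulting series in closed form via the appropriate quadratic (base-doubling) $q$-summation (a suitable specialisation of a $q$-Pfaff--Saalsch{\"u}tz/Watson-type transformation): the closed form carries a factor that is forced to vanish exactly when $m-s$ is odd. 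If extracting a clean closed form is awkward, the same vanishing can instead be certified by the $q$-WZ method, by producing $G_k$ with $T_k=G_{k+1}-G_k$ and $G_s=G_{m+1}=0$, where $T_k$ denotes the $k$-th summand.

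The reductions in the first two paragraphs are the routine part. The crux is the last step, and the main obstacle there is a clash of bases: $(q^d;q^{2d})_k$ and $(abq^{2d};q^{2d})_k$ live in base $q^{2d}$ whereas the remaining $q$-shifted factorials live in base $q^d$, so no single-base $q$-summation applies directly and the finite identity must be routed through a genuinely quadratic transformation (or, equivalently, through a WZ certificate). Identifying the correct such identity, checking that after $a=q^{-r-dm}$ its parameters land in a summable configuration, and verifying that its evaluation vanishes in precisely the parity class of $m-s$ cut out by the hypothesis on $s$ --- with separate bookkeeping for the two parities of $m$ --- is where essentially all the difficulty lies. Finally, letting $a\to1$ and $b\to1$ in \eqref{eq:eq:conj6.1-ab-gen} recovers a $q$-congruence modulo $\Phi_n(q)^2$, consistent with the fact that both $r+d\langle -r/d\rangle_n$ and $d-r+d\langle (r-d)/d\rangle_n$ are divisible by $n$.
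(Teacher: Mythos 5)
Your reductions are exactly those of the paper's proof: coprimality of the two linear factors, the symmetry $(a,b,r)\mapsto(b,a,d-r)$ justified by $\langle -r/d\rangle_n+\langle (r-d)/d\rangle_n=n-1$ (even for odd $n$), the specialization $a=q^{-r-dm}$ with $m=\langle -r/d\rangle_n$, the collapse of the sum to $s\leqslant k\leqslant m$, and the observation that the parity hypothesis forces $m-s$ odd. All of that is correct. But the step you yourself identify as ``where essentially all the difficulty lies'' is precisely the step you do not carry out, and it is the entire content of the theorem: you never name the summation formula, never verify that the specialized series has parameters in a summable configuration, and never exhibit the factor that vanishes for $m-s$ odd. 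As written, this is a plan with the crux deferred, not a proof.

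Moreover, your diagnosis of that crux is off in a way that would send you looking for the wrong tool. There is no genuine ``clash of bases'' requiring a quadratic transformation or a WZ certificate: the base-$q^{2d}$ factors split as $(x^2;q^{2d})_k=(x;q^d)_k(-x;q^d)_k$, so after reindexing by $k=j+s$ the sum becomes a single-base, balanced, terminating
\begin{align*}
{}_{4}\phi_{3}\left[\!\!\begin{array}{c}
q^{(s-m)d},\, bq^{(s+1)d-r},\,q^{(s+1/2)d},\, -q^{(s+1/2)d}\\
\sqrt{b}q^{(s+1)d-(md+r)/2},\, -\sqrt{b}q^{(s+1)d-(md+r)/2},\, q^{(2s+1)d}\end{array}\!\!;q^d,q^d\right],
\end{align*}
which is exactly the parameter pattern of Andrews' terminating $q$-analogue of Watson's formula (Gasper--Rahman (II.17)) with truncation index $N=m-s$; that formula evaluates the series to $0$ whenever $N$ is odd, which is what the hypothesis on $s$ guarantees. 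Supplying this identification and the (routine but necessary) parameter match is what turns your outline into the paper's proof.
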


It is easy to see that both $r+d\langle-r/d\rangle_n$ and $d-r+d\langle (r-d)/d\rangle_n$ are divisible by $n$,
and so the limit of $(1-aq^{r+d\langle-r/d\rangle_n})(a-q^{d-r+d\langle (r-d)/d\rangle_n})$ as $a,b\to1$ has the factor $\Phi_n(q)^2$. On the other hand,
all the denominators on the left-hand side of  \eqref{eq:eq:conj6.1-ab-gen} as $a,b\to1$ are relatively prime to $\Phi_n(q)$ since $\gcd(d,n)=1$ and $n$ is odd.
Thus, letting $a,b\to 1$ in \eqref{eq:eq:conj6.1-ab-gen}, we are led to the following result.
\begin{cor}\label{thm:conj6.1}
Let $d$, $n$ and $r$ be positive integers with $\gcd(d,n)=1$ and $n$ odd.
Let $s\leqslant n-1$ be a nonnegative integer with $s\equiv \langle -r/d\rangle_n+1\pmod 2$. Then
\begin{align*}
\sum_{k=s}^{n-1}\frac{(q^r;q^d)_k (q^{d-r};q^d)_{k} (q^d;q^{2d})_{k} q^{dk} }{ (q^d;q^d)_{k-s}(q^d;q^d)_{k+s} (q^{2d};q^{2d})_k }
\equiv 0 \pmod{\Phi_n(q)^2}.
\end{align*}
\end{cor}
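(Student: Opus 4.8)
The Corollary is immediate from Theorem~\ref{thm:dnr-rs} by the limiting argument sketched just above it: on letting $a,b\to1$ in \eqref{eq:eq:conj6.1-ab-gen}, each denominator $(q^d;q^d)_{k-s}$, $(q^d;q^d)_{k+s}$, $(q^{2d};q^{2d})_k$ with $0\leqslant k\leqslant n-1$ is relatively prime to $\Phi_n(q)$ (because $\gcd(d,n)=1$ and $n$ is odd), while the product of the two moduli degenerates to a polynomial divisible by $\Phi_n(q)^2$. So the real task is to prove Theorem~\ref{thm:dnr-rs} by creative microscoping. The first step is to split the modulus: the polynomials $1-aq^{r+d\langle-r/d\rangle_n}$ and $1-bq^{d-r+d\langle(r-d)/d\rangle_n}$ are relatively prime polynomials in $q$ (the first free of $b$, the second free of $a$), so, exactly as for \eqref{q4a-new}, it suffices to prove \eqref{eq:eq:conj6.1-ab-gen} modulo each of them. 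Since the summand is invariant under the simultaneous exchange $r\leftrightarrow d-r$, $a\leftrightarrow b$, and $\langle(r-d)/d\rangle_n=n-1-\langle-r/d\rangle_n$ (so that, $n$ being odd, this exchange preserves the parity condition on $s$ and interchanges the two factors of the modulus), it is enough to treat the factor $1-aq^{r+d\langle-r/d\rangle_n}$. Write $m:=\langle-r/d\rangle_n$, so that $r+dm\equiv0\pmod n$ and $0\leqslant m\leqslant n-1$.

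Now specialize $a=q^{-r-dm}$. Then $(aq^r;q^d)_k=(q^{-dm};q^d)_k$ vanishes for every $k\geqslant m+1$ (one of its factors being $1-q^{-dm}q^{dm}=0$), so the left-hand side of \eqref{eq:eq:conj6.1-ab-gen} collapses to the finite sum
$$
S:=\sum_{k=s}^{m}\frac{(q^{-dm};q^d)_k\,(bq^{d-r};q^d)_k\,(q^d;q^{2d})_k\,q^{dk}}{(q^d;q^d)_{k-s}(q^d;q^d)_{k+s}\,(bq^{2d-r-dm};q^{2d})_k},
$$
where I have used $abq^{2d}=bq^{2d-r-dm}$. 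If $s>m$ the sum is empty and there is nothing to prove, so assume $s\leqslant m$; it remains to show $S=0$ identically in $q$ and $b$.

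For this I would use the reversal $k\mapsto m+s-k$, which is an involution of $\{s,s+1,\dots,m\}$; since $s\equiv m+1\pmod2$ by hypothesis, $m+s$ is odd and the involution is fixed-point-free. Writing $T(k)$ for the $k$-th term of $S$, the claim is that $T(m+s-k)=-T(k)$ for all $k$; granting it, pairing $k$ with $m+s-k$ gives $S=-S$, whence $S=0$. To verify the claim one reverses each $q$-shifted factorial by the standard identities
$$
(q^{-N};q)_j=\frac{(q;q)_N}{(q;q)_{N-j}}(-1)^j q^{\binom j2-Nj},\qquad (x;q)_j=(-x)^j q^{\binom j2}(q^{1-j}/x;q)_j,
$$
applied with base $q^d$ to $(q^{-dm};q^d)_k$, $(bq^{d-r};q^d)_k$, $(q^d;q^d)_{k\pm s}$ and with base $q^{2d}$ to $(q^d;q^{2d})_k$ and $(bq^{2d-r-dm};q^{2d})_k$; one then checks that the powers of $q$, the powers of $b$, and the signs all cancel except for a residual factor $(-1)^{m+s}=-1$. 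Hence $S=0$, which is \eqref{eq:eq:conj6.1-ab-gen} modulo $1-aq^{r+d\langle-r/d\rangle_n}$; the symmetric specialization $b=q^{-(d-r)-d\langle(r-d)/d\rangle_n}$ disposes of the other factor, and relative primality of the two factors combines the two congruences.

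I expect the reversal computation to be the only real obstacle. The summand mixes the bases $q^d$ and $q^{2d}$ (the factors $(q^d;q^{2d})_k$ and $(bq^{2d-r-dm};q^{2d})_k$ live in base $q^{2d}$, the remaining ones in base $q^d$), so one must be careful that the $b$-dependent numerator $(bq^{d-r};q^d)_k$ and the $b$-dependent denominator $(bq^{2d-r-dm};q^{2d})_k$ reflect compatibly, and that the displaced factorials $(q^d;q^d)_{k-s}$ and $(q^d;q^d)_{k+s}$ recombine into the corresponding ones at $m+s-k$; the parity hypothesis on $s$ is used precisely to turn $(-1)^{m+s}$ into $-1$. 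Should the bare term-by-term reversal prove stubborn, a fallback is to put $k=s+j$, factor out the $j=0$ term, and recognize the resulting terminating series as an instance of a known very-well-poised or quadratic $q$-summation whose closed form carries a vanishing $q$-shifted factorial.
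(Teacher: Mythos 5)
Your reduction of the Corollary to Theorem~\ref{thm:dnr-rs}, the splitting of the modulus into the two coprime factors, the symmetry reducing to the factor $1-aq^{r+d\langle-r/d\rangle_n}$, and the observation that the specialization $a=q^{-r-dm}$ (with $m=\langle-r/d\rangle_n$) truncates the sum at $k=m$ all agree with the paper. But the key step --- the claimed term-by-term antisymmetry $T(m+s-k)=-T(k)$ under the reversal $k\mapsto m+s-k$ --- is false in general, for exactly the reason you flagged and then waved away: the $b$-dependent factors $(bq^{d-r};q^d)_k$ (base $q^d$) and $(abq^{2d};q^{2d})_k$ (base $q^{2d}$) do not reflect compatibly. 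The $b$-part of $T(k)$ is $(c;Q)_k/(cQ^{1-m};Q^2)_k$ with $Q=q^d$, $c=bq^{d-r}$; for $T(m+s-k)/T(k)$ to be free of $b$ one would need the exponent set $\{k,k+1,\dots,m+s-k-1\}$ (consecutive integers) to coincide with $\{2k+1-m,2k+3-m,\dots,m+2s-2k-1\}$ (step $2$), which happens only when $m+s-2k\leqslant 1$. Concretely, take $d=1$, $r=2$, $n=5$, $s=0$, so $m=3$ and $a=q^{-5}$: then $T(0)=1$, while the $b$-dependence of $T(3)$ is $\dfrac{1-b}{1-bq^{-3}}$, so $T(3)\neq -T(0)$ as a rational function of $b$; likewise $T(1)$ and $T(2)$ carry the distinct $b$-parts $\dfrac{1-bq^{-1}}{1-bq^{-3}}$ and $\dfrac{1-b}{1-bq^{-3}}$. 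The total sum does vanish, but only through cancellation across all four terms, not in pairs, so the pairing argument cannot be repaired. (Your checks with $m+s-2k\leqslant1$, where each ``pair'' has at most two nonvanishing terms, are precisely the degenerate cases where the pairing happens to work.)

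What is actually needed is your ``fallback'': after pulling out the $k=s$ term, the specialized sum is a terminating balanced ${}_4\phi_3$ of Watson type,
\begin{align*}
{}_{4}\phi_{3}\left[\!\!\begin{array}{c}
q^{(s-m)d},\, bq^{(s+1)d-r},\,q^{(s+1/2)d},\, -q^{(s+1/2)d}\\
\sqrt{b}q^{(s+1)d-(md+r)/2},\, -\sqrt{b}q^{(s+1)d-(md+r)/2},\, q^{(2s+1)d}\end{array}\!\!;q^d,q^d\right],
\end{align*}
and Andrews' terminating $q$-analogue of Watson's formula \cite[(II.17)]{GR} evaluates it to $0$ because its degree $m-s$ is odd by the parity hypothesis on $s$ (the case $s>m$ being trivial since $(q^{-md};q^d)_s=0$). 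This is the paper's proof; identifying and invoking that summation is the substantive content missing from your argument.
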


We point out that, when $n$ is an odd prime, the above corollary confirms a conjecture of Guo and Zeng \cite[Conjecture~6.1]{GZ15}. It should also be mentioned that
the $d=2$, $r=1$, and $n$ being an odd prime $p$ case of Theorem~\ref{thm:conj6.1} has already been
given by Guo and Zeng \cite[Theorem 1.1]{GZ15}, and another special case where $n=p$ and $s\leqslant\min\{\langle -r/d\rangle_p,\langle -(d-r)/d\rangle_p\}$
has also been  given by Guo and Zeng \cite[Theorem~1.3]{GZ15}.
But the proofs given there are rather complicated (10 pages in total). Moreover, when $s=0$ and $q\to 1$, Corollary~\ref{thm:conj6.1} reduces to a result of Sun \cite[Theorem 2.5]{SunZH2}.

Another main result of this paper is as follows.
\begin{thm}\label{thm:abx}
Let $d$, $n$ and $r$ be positive integers with $\gcd(d,n)=1$ and $n$
odd. Then, modulo $(1-aq^{r+d\langle
-r/d\rangle_n})(1-bq^{d-r+d\langle (r-d)/d\rangle_n})$,
\begin{align}
\sum_{k=0}^{n-1}\frac{(aq^r;q^d)_k (bq^{d-r};q^d)_k (x;q^{d})_k q^{dk}}{(q^d;q^d)_k (abq^{2d};q^{2d})_k}
\equiv (-1)^{\langle -r/d\rangle_n}\sum_{k=0}^{n-1}\frac{(aq^r;q^d)_k (bq^{d-r};q^d)_k
(-x;q^{d})_k q^{dk}}{(q^d;q^d)_k (abq^{2d};q^{2d})_k}. \label{eq:ab-ab}
\end{align}
\end{thm}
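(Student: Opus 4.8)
\medskip
\noindent\emph{Proof plan.} The plan is to reduce \eqref{eq:ab-ab} to a single terminating basic hypergeometric summation. First note that the two polynomials $1-aq^{r+d\langle-r/d\rangle_n}$ and $1-bq^{d-r+d\langle(r-d)/d\rangle_n}$ are coprime (one involves $a$ but not $b$, the other the reverse), so it suffices to prove the congruence modulo each of them. Writing $m:=\langle-r/d\rangle_n$, one has $m+\langle(r-d)/d\rangle_n=n-1$, which is even since $n$ is odd; hence $m$ and $\langle(r-d)/d\rangle_n$ have the same parity, and the substitution $a\leftrightarrow b,\ r\mapsto d-r$ — which leaves both sides of \eqref{eq:ab-ab} unchanged (the summands are visibly symmetric in it and the sign $(-1)^{\langle-r/d\rangle_n}$ is unaffected) while interchanging the two factors of the modulus — reduces the problem to the congruence modulo $1-aq^{r+dm}$ alone. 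That congruence means exactly that \eqref{eq:ab-ab} holds after the substitution $aq^r=q^{-dm}$; and then $(aq^r;q^d)_k=(q^{-dm};q^d)_k$ vanishes for $k>m$, so both sums truncate to $\sum_{k=0}^{m}$ and $abq^{2d}$ becomes $bq^{2d-r-dm}$.

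\medskip
Next I would expand $(\pm x;q^d)_k=\sum_{j=0}^{k}\binom{k}{j}_{q^d}(\mp x)^jq^{d\binom{j}{2}}$ by the $q$-binomial theorem and interchange the order of summation. Since $\binom{k}{j}_{q^d}/(q^d;q^d)_k=1/\bigl((q^d;q^d)_j(q^d;q^d)_{k-j}\bigr)$ and $(q^d;q^d)_j$ is invertible modulo $1-aq^{r+dm}$, comparing coefficients of $x^j$ shows that \eqref{eq:ab-ab} is equivalent to the vanishing, for every $j$ with $j\not\equiv m\pmod 2$, of
\begin{equation*}
\sum_{k=j}^{m}\frac{(q^{-dm};q^d)_k\,(bq^{d-r};q^d)_k\,q^{dk}}{(q^d;q^d)_{k-j}\,(bq^{2d-r-dm};q^{2d})_k}.
\end{equation*}
Putting $k=j+\ell$, pulling out the nonzero factor $(q^{-dm};q^d)_j(bq^{d-r};q^d)_jq^{dj}/(bq^{2d-r-dm};q^{2d})_j$, and writing $M:=m-j$ (which is odd) and $b':=bq^{d-r+dj}$, this reduces to showing that
\begin{equation*}
\sum_{\ell=0}^{M}\frac{(q^{-dM};q^d)_\ell\,(b';q^d)_\ell\,q^{d\ell}}{(q^d;q^d)_\ell\,(q^{-dM}b'q^{d};q^{2d})_\ell}=0,
\end{equation*}
where I have used $b'q^{d(1-M)}=q^{-dM}b'q^{d}$.

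\medskip
This last identity is the crux. The displayed sum is the terminating case of the basic hypergeometric series evaluated by a $q$-analogue of Gauss's second ${}_2F_1$-summation theorem (equivalently, a terminating quadratic summation); its closed-form product carries the factor $(q^{d(1-M)};q^{2d})_\infty$ (or its terminating analogue) in the numerator, and that factor vanishes precisely when $M$ is odd, because then its term of index $(M-1)/2$ equals $1-q^{0}=0$. Hence the sum is $0$ and the proof is complete.

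\medskip
\noindent The hard part is this last step: correctly identifying and invoking the summation formula for $\sum_{\ell\ge0}(q^{-dM};q^d)_\ell(b';q^d)_\ell q^{d\ell}/\bigl((q^d;q^d)_\ell(q^{-dM}b'q^{d};q^{2d})_\ell\bigr)$. Because of the two bases $q^d$ and $q^{2d}$ it is not a plain ${}_2\phi_1$, so matching it with the known $q$-Gauss-type evaluation requires care; alternatively one can prove the needed vanishing directly by induction on the odd integer $M$ (or by a routine WZ-type argument). A secondary point that must be handled carefully is the bookkeeping in the reductions above — in particular tracking the base-$q^{2d}$ factor $(abq^{2d};q^{2d})_k$ through the substitution $aq^r=q^{-dm}$ and the shift $k\mapsto j+\ell$.
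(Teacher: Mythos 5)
Your proposal is correct and takes essentially the same route as the paper: the paper isolates your key computation as a lemma asserting $F_m(x,b',q^d)=(-1)^mF_m(-x,b',q^d)$ for the truncated sum obtained after setting $aq^r=q^{-d\langle-r/d\rangle_n}$, and its first proof is exactly your $q$-binomial expansion and coefficient comparison, with the final vanishing supplied by Andrews' $q$-analogue of Gauss's ${}_2F_1(-1)$ sum \cite[Appendix (II.11)]{GR} after the substitution $q\mapsto q^{-1}$, $b\mapsto b^{-1}$, which converts the quadratic power $q^{\ell(\ell+1)/2}$ in that theorem into the linear power $q^{d\ell}$ appearing in your terminating sum. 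In particular, the summation formula you were unsure how to match is precisely that terminating $q$-Gauss second theorem, and your vanishing mechanism (the numerator factor $(q^{d(1-M)};q^{2d})_\infty$ containing $1-q^0$ when $M$ is odd) is exactly the one used in the paper.
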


It is clear that when $a,b\to 1$ and $n$ is an odd prime, the above theorem confirms a conjecture of Guo and Zeng \cite[Conjecture 7.3]{GZ14}.
Moreover, letting $b=1/a$ and $x=-1$ in \eqref{eq:ab-ab} and noticing that
$$
\frac{(-1;q^d)_k}{(q^{2d};q^{2d})_k}
=\frac{2}{(q^d;q^d)_k (1+q^{dk})},
$$
we get the following conclusion.
\begin{cor}\label{conj:3.3}
Let $d$, $n$ and $r$ be positive integers with $\gcd(d,n)=1$ and $n$
odd. Then, modulo $(1-aq^{r+d\langle-r/d\rangle_n})(a-q^{d-r+d\langle (r-d)/d\rangle_n})$,
\begin{equation}
\sum_{k=0}^{n-1}\frac{2(aq^r;q^d)_k (q^{d-r}/a;q^d)_k
q^{dk}}{(q^d;q^d)_{k}(q^d;q^d)_{k} (1+q^{dk})} \equiv (-1)^{\langle-r/d\rangle_n}.  \label{eq:conj3.3-a}
\end{equation}
\end{cor}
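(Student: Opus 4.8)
The plan is to obtain Corollary~\ref{conj:3.3} as an immediate specialization of Theorem~\ref{thm:abx}; no identity beyond the one already displayed in the statement is required. First I would put $x=-1$ in the congruence \eqref{eq:ab-ab}. Then $-x=1$, so the $q$-shifted factorial $(-x;q^d)_k=(1;q^d)_k$ contains the factor $1-1=0$ for every $k\geqslant1$, and the sum on the right-hand side of \eqref{eq:ab-ab} collapses to its $k=0$ term, which equals $1$. Hence, for $x=-1$, the right-hand side of \eqref{eq:ab-ab} is just $(-1)^{\langle -r/d\rangle_n}$, in agreement with the right-hand side of \eqref{eq:conj3.3-a}; note that this step is independent of $a$ and $b$.

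Next I would set $b=1/a$. This turns $abq^{2d}$ into $q^{2d}$, so that $(abq^{2d};q^{2d})_k=(q^{2d};q^{2d})_k$, and (still with $x=-1$) the left-hand side of \eqref{eq:ab-ab} becomes
$$
\sum_{k=0}^{n-1}\frac{(aq^r;q^d)_k (q^{d-r}/a;q^d)_k (-1;q^{d})_k q^{dk}}{(q^d;q^d)_k (q^{2d};q^{2d})_k}.
$$
Invoking the elementary identity
$$
\frac{(-1;q^d)_k}{(q^{2d};q^{2d})_k}=\frac{2}{(q^d;q^d)_k (1+q^{dk})},
$$
which holds for all $k\geqslant0$ by the factorization $1-q^{2dj}=(1-q^{dj})(1+q^{dj})$, transforms the last display into precisely the left-hand side of \eqref{eq:conj3.3-a}. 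As for the modulus, under $b=1/a$ the factor $1-bq^{d-r+d\langle (r-d)/d\rangle_n}$ becomes $1-q^{d-r+d\langle (r-d)/d\rangle_n}/a=\frac1a\,(a-q^{d-r+d\langle (r-d)/d\rangle_n})$, which differs from $a-q^{d-r+d\langle (r-d)/d\rangle_n}$ only by the unit $1/a$, while $1-aq^{r+d\langle -r/d\rangle_n}$ is unchanged. Thus the two moduli coincide up to a unit, and \eqref{eq:conj3.3-a} follows.

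Since the substantive work is already contained in Theorem~\ref{thm:abx}, the only point requiring a moment's care — and the only thing I would regard as a mild obstacle — is the legitimacy of substituting $b=1/a$ into a $q$-congruence whose modulus is a polynomial in both $a$ and $b$. To handle this I would read \eqref{eq:ab-ab} as a relation in the ring $\mathbb{Z}[a^{\pm1},b^{\pm1},q^{\pm1}]$ modulo the stated product, and observe that $b\mapsto a^{-1}$ is a well-defined ring homomorphism into $\mathbb{Z}[a^{\pm1},q^{\pm1}]$ under which both sides and the modulus transform as described above, so the congruence is preserved. If one prefers to work over $\mathbb{Z}[a,q]$, one simply clears the power of $a$ from the transformed second factor of the modulus, which is harmless because $a$ is a unit modulo each factor. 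Everything else is the bookkeeping already indicated after the statement of Theorem~\ref{thm:abx}.
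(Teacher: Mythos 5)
Your proposal is correct and follows essentially the same route as the paper, which obtains Corollary~\ref{conj:3.3} precisely by setting $b=1/a$ and $x=-1$ in \eqref{eq:ab-ab} and invoking the identity $(-1;q^d)_k/(q^{2d};q^{2d})_k=2/\bigl((q^d;q^d)_k(1+q^{dk})\bigr)$; your extra care about the collapse of the right-hand side to its $k=0$ term and about the unit factor $1/a$ in the transformed modulus is sound and merely makes explicit what the paper leaves implicit.
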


Similarly as before, letting $a\to 1$ in \eqref{eq:conj3.3-a}, we get
\begin{equation*}
\sum_{k=0}^{n-1}\frac{2(q^r;q^d)_k (q^{d-r};q^d)_k q^{dk}}{(q^d;q^d)_{k}(q^d;q^d)_{k} (1+q^{dk})} \equiv (-1)^{\langle
-r/d\rangle_n} \pmod{\Phi_n(q)^2},
\end{equation*}
which was originally conjectured by Guo, Pan, and Zhang \cite[Conjecture 3.3]{GPZ}.

Using Theorem \ref{thm:abx}, we shall also confirm two conjectures of Guo and Zudilin \cite[Conjectures 4.15 and 4.16]{GuoZu}.

\begin{cor}\label{cor-1}
{\rm\cite[Conjecture 4.16]{GuoZu}}
Let $d$, $n$ and $r$ be positive integers with $r<d$, $\gcd(d,n)=1$, and $n$ odd.
Then, modulo $(1-aq^{n\langle r/n\rangle_d})(1-bq^{n\langle(d-r)/n\rangle_d})$,
\begin{align*}
\sum_{k=0}^{n-1}\frac{(aq^r;q^d)_k (bq^{d-r};q^d)_k (x;q^{d})_k q^{dk}}{(q^d;q^d)_k (abq^{2d};q^{2d})_k}
\equiv (-1)^{\langle -r/d\rangle_n}\sum_{k=0}^{n-1}\frac{(aq^r;q^d)_k (bq^{d-r};q^d)_k (-x;q^{d})_k q^{dk}}{(q^d;q^d)_k (abq^{2d};q^{2d})_k}.
\end{align*}
\end{cor}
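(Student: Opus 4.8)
The plan is to deduce Corollary~\ref{cor-1} directly from Theorem~\ref{thm:abx}: the two statements are in fact the \emph{same} $q$-congruence, only with the modulus written in two different ways, so no new analytic work is needed and the whole task reduces to an elementary identity on least non-negative residues. Precisely, I would show that, whenever $0<r<d$ and $\gcd(d,n)=1$,
$$
r+d\langle -r/d\rangle_n = n\langle r/n\rangle_d
\qquad\text{and}\qquad
d-r+d\langle (r-d)/d\rangle_n = n\langle (d-r)/n\rangle_d,
$$
the second being the first applied to $d-r$ in place of $r$ (using $\langle -(d-r)/d\rangle_n=\langle (r-d)/d\rangle_n$, and $0<d-r<d$).

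To prove the first identity, put $L:=r+d\langle -r/d\rangle_n$. As already noted in the text, $d\langle -r/d\rangle_n\equiv -r\pmod n$, so $n\mid L$; write $L=nm$. From $0\leqslant\langle -r/d\rangle_n\leqslant n-1$ and $0<r<d$ we get $1\leqslant L\leqslant dn-1$, hence $1\leqslant m\leqslant d-1$. On the other hand $L\equiv r\pmod d$, so $m\equiv rn^{-1}\pmod d$; and since $0<r<d$ the residue $\langle r/n\rangle_d$ also lies in $\{1,\dots,d-1\}$, so the two must coincide, i.e.\ $m=\langle r/n\rangle_d$ and $L=n\langle r/n\rangle_d$ as claimed.

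Consequently the modulus $(1-aq^{n\langle r/n\rangle_d})(1-bq^{n\langle (d-r)/n\rangle_d})$ in Corollary~\ref{cor-1} is literally the polynomial $(1-aq^{r+d\langle -r/d\rangle_n})(1-bq^{d-r+d\langle (r-d)/d\rangle_n})$ of Theorem~\ref{thm:abx}, while the two sides of the congruence are exactly those of \eqref{eq:ab-ab}; hence Corollary~\ref{cor-1} is immediate from Theorem~\ref{thm:abx}. There is no real obstacle here — the substantive content lives in the proof of Theorem~\ref{thm:abx} — but the one point that genuinely matters is the hypothesis $r<d$: it is exactly what confines both $m$ and $\langle r/n\rangle_d$ to the range $\{1,\dots,d-1\}$ and thereby forces them to agree, and without it the two exponents can differ and the reduction breaks.
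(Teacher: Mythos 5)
Your proposal is correct and follows essentially the same route as the paper: the author also deduces Corollary~\ref{cor-1} from Theorem~\ref{thm:abx} together with the exponent identity $r+d\langle -r/d\rangle_n=n\langle r/n\rangle_d$ for $r<d$ (stated as Proposition~\ref{prop:dnr}), proved by the same elementary residue/range argument. Your verification of the identity (and its application to $d-r$) is sound, including the observation that $r<d$ is the hypothesis that pins both quantities into $\{1,\dots,d-1\}$.
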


\begin{cor}\label{cor-2}
{\rm\cite[Conjecture 4.15]{GuoZu}}
Let $n$ be a positive odd integer. Then
\begin{align*}
&
\sum_{k=0}^{n-1}\frac{(aq;q^2)_k (bq;q^2)_k (x;q^2)_k q^{2k}}{(q^2;q^2)_k (abq^4;q^4)_k}
\\ &\qquad
\equiv (-1)^{(n-1)/2}\sum_{k=0}^{n-1}\frac{(aq;q^2)_k (bq;q^2)_k (-x;q^2)_k q^{2k}}
{(q^2;q^2)_k (abq^4;q^4)_k} \pmod{(1-aq^n)(1-bq^n)}.
\end{align*}
\end{cor}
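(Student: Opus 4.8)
The plan is to obtain Corollary~\ref{cor-2} as the special case $d=2$, $r=1$ of Theorem~\ref{thm:abx}, so that essentially nothing new needs to be proved: the entire content lies in Theorem~\ref{thm:abx}, and what remains is only to check that this specialization reproduces the stated modulus and the stated sign.

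First I would note that the hypotheses of Theorem~\ref{thm:abx} with $d=2$ and $r=1$ hold precisely when $n$ is a positive odd integer, since then $\gcd(2,n)=1$ is automatic. Next I would evaluate the relevant least non-negative residue: because $n$ is odd, $2$ is invertible modulo $n$ and $2\cdot\frac{n-1}{2}=n-1\equiv-1\pmod n$, so $\langle -1/2\rangle_n=(n-1)/2$; likewise $\langle(r-d)/d\rangle_n=\langle -1/2\rangle_n=(n-1)/2$. Hence the two exponents appearing in the modulus of Theorem~\ref{thm:abx} become
\[
r+d\langle -r/d\rangle_n=1+2\cdot\frac{n-1}{2}=n,\qquad d-r+d\langle(r-d)/d\rangle_n=1+2\cdot\frac{n-1}{2}=n,
\]
so the modulus $(1-aq^{r+d\langle -r/d\rangle_n})(1-bq^{d-r+d\langle(r-d)/d\rangle_n})$ is exactly $(1-aq^n)(1-bq^n)$, while the prefactor $(-1)^{\langle -r/d\rangle_n}$ is exactly $(-1)^{(n-1)/2}$.

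Finally I would substitute $d=2$, $r=1$ into \eqref{eq:ab-ab}, using $(aq^r;q^d)_k=(aq;q^2)_k$, $(bq^{d-r};q^d)_k=(bq;q^2)_k$, $(x;q^d)_k=(x;q^2)_k$, $(-x;q^d)_k=(-x;q^2)_k$, $(q^d;q^d)_k=(q^2;q^2)_k$, $(abq^{2d};q^{2d})_k=(abq^4;q^4)_k$, and $q^{dk}=q^{2k}$; this turns \eqref{eq:ab-ab} verbatim into the congruence asserted in Corollary~\ref{cor-2}. There is no genuine obstacle here: the only points requiring any care are the evaluation $\langle -1/2\rangle_n=(n-1)/2$ and the observation that both exponents collapse to $n$. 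If one preferred not to cite Theorem~\ref{thm:abx} at all, one could instead rerun the \emph{creative microscoping} argument used to prove it in the case $d=2$, $r=1$, but this would merely reproduce the same computation.
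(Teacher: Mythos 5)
Your proposal is correct and matches the paper's route: the paper also obtains Corollary~\ref{cor-2} as the $d=2$, $r=1$ specialization of Theorem~\ref{thm:abx} (stated there as passing through Corollary~\ref{cor-1}, which rests on the elementary exponent identity of Proposition~\ref{prop:dnr}). Your direct computation $\langle-1/2\rangle_n=(n-1)/2$, giving both exponents equal to $n$ and the sign $(-1)^{(n-1)/2}$, is exactly the required verification.
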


Note that Corollary \ref{cor-2} is the $d=2$ and $r=1$ case of Corollary \ref{cor-1}.
The other results of this paper are closely related to a famous supercongruence of Rodriguez-Villegas (see \cite{Mortenson1,RV}):
\begin{align}
\sum_{k=0}^{p-1}\frac{{2k\choose k}^2}{16^k} \equiv (-1)^{(p-1)/2}\pmod{p^2}\quad\text{for any odd prime $p$.}  \label{eq:RV}
\end{align}
The author and Zeng \cite{GZ14} proved a $q$-analogue of \eqref{eq:RV}:
\begin{align}
\sum_{k=0}^{p-1}\frac{(q;q^2)_k^2}{(q^2;q^2)_k^2} \equiv (-1)^{(p-1)/2}q^{(1-p^2)/4}\pmod{[p]^2}\quad\text{for odd prime $p$}, \label{eq:GZ-RV}
\end{align}
which has been generalized by Guo, Pan, and Zhang \cite{GPZ} and Ni and Pan \cite{NP}.
In this paper we shall give some new parameter-generalizations of \eqref{eq:GZ-RV}, such as
\begin{thm}\label{thm:2.2}Let $n$ be a positive odd integer. Then
\begin{align}
\sum_{k=0}^{n-1}\frac{(aq;q^2)_k (q/a;q^2)_k}{(q^2;q^2)_k^2}
&\equiv (-1)^{(n-1)/2}q^{(1-n^2)/4}\pmod{(1-aq^n)(a-q^n)}. \label{eq:q-RV1}
\end{align}
\end{thm}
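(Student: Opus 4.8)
The plan is to follow the creative-microscoping template established in \cite{GuoZu}: since $[n]$, $1-aq^n$, and $a-q^n$ are pairwise coprime, it suffices to prove \eqref{eq:q-RV1} separately modulo $1-aq^n$, modulo $a-q^n$, and modulo $[n]=\Phi_n(q)\cdot(\text{other cyclotomic factors})$; but in fact the right-hand side is a constant (independent of $k$), so the cleanest route is to verify the congruence modulo $1-aq^n$ and modulo $a-q^n$, and then argue that the truth modulo $(1-aq^n)(a-q^n)$ forces the stated form because the limit $a\to1$ must recover \eqref{eq:GZ-RV}. First I would substitute $a=q^{-n}$ into the left-hand side. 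Then $(aq;q^2)_k=(q^{1-n};q^2)_k$, and since $n$ is odd, $q^{1-n}=q\cdot q^{-n}$ with $1-n$ even, the factor $(q^{1-n};q^2)_k$ vanishes as soon as $k\geqslant (n+1)/2$; similarly $(q/a;q^2)_k=(q^{1+n};q^2)_k$ truncates the sum to $k\leqslant (n-1)/2$. So modulo $1-aq^n$ the sum reduces to $\sum_{k=0}^{(n-1)/2}(q^{1-n};q^2)_k(q^{1+n};q^2)_k/(q^2;q^2)_k^2$, a terminating very-well-poised-type $_2\phi_1$ (or rather a balanced $_2\phi_1$) that can be evaluated in closed form by the $q$-Chu--Vandermonde summation. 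The same computation with $a=q^n$ gives the identical sum by the $a\leftrightarrow 1/a$ symmetry of the summand. The hard part will be checking that the closed-form evaluation of this terminating sum equals exactly $(-1)^{(n-1)/2}q^{(1-n^2)/4}$ modulo $1-aq^n$ — i.e., pinning down the power of $q$ and the sign, which is the analogue of the Rodriguez-Villegas constant.

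For the $q$-Chu--Vandermonde step I would write the truncated sum as $\sum_{k\geqslant0}\frac{(q^{1-n};q^2)_k(q^{1+n};q^2)_k}{(q^2;q^2)_k(q^2;q^2)_k}$ and recognize it as $_2\phi_1\!\big(q^{1-n},q^{1+n};q^2;q^2,q^2\big)$ in base $q^2$. By the $q$-Gauss/$q$-Chu--Vandermonde formula this equals $\dfrac{(q^{1-n}\cdot? ;q^2)_\infty\cdots}{\cdots}$; more precisely, using the terminating form
$$
{}_2\phi_1(q^{-2N},b;c;q^2,q^2)=\frac{(c/b;q^2)_N}{(c;q^2)_N}\,b^{N},
$$
with $q^{-2N}=q^{1-n}$ (so $N=(n-1)/2$), $b=q^{1+n}$, $c=q^2$, we get
$$
\sum_{k=0}^{(n-1)/2}\frac{(q^{1-n};q^2)_k(q^{1+n};q^2)_k}{(q^2;q^2)_k^2}
=\frac{(q^{1-n};q^2)_{(n-1)/2}}{(q^{2};q^2)_{(n-1)/2}}\,q^{(n+1)(n-1)/2}.
$$
Then I would simplify $(q^{1-n};q^2)_{(n-1)/2}=\prod_{j=0}^{(n-3)/2}(1-q^{1-n+2j})$: pairing the factor indexed by $j$ with $(q^2;q^2)$-factors and pulling out powers of $q$ and signs, each negative-exponent factor $1-q^{-m}=-q^{-m}(1-q^m)$ contributes a sign and a $q$-power, and the ratio collapses (this is exactly the computation that appears in the $n=p$ prime case in \cite{GZ14}). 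Careful bookkeeping of the exponents $\sum_j(1-n+2j)$ plus the $q^{(n^2-1)/2}$ factor should yield total exponent $(1-n^2)/4$ and total sign $(-1)^{(n-1)/2}$; this bookkeeping is the only genuinely technical point.

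Finally, having shown both
$$
\sum_{k=0}^{n-1}\frac{(aq;q^2)_k(q/a;q^2)_k}{(q^2;q^2)_k^2}\bigg|_{a=q^{-n}}
=(-1)^{(n-1)/2}q^{(1-n^2)/4}
=\sum_{k=0}^{n-1}\frac{(aq;q^2)_k(q/a;q^2)_k}{(q^2;q^2)_k^2}\bigg|_{a=q^{n}},
$$
I would conclude that the polynomial
$$
f(a):=\sum_{k=0}^{n-1}\frac{(aq;q^2)_k(q/a;q^2)_k}{(q^2;q^2)_k^2}-(-1)^{(n-1)/2}q^{(1-n^2)/4}
$$
(a Laurent polynomial in $a$ over $\mathbb{Z}[q,q^{-1}]$, after clearing the $a\leftrightarrow1/a$ denominators) is divisible by both $1-aq^n$ and $a-q^n$. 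Since these two linear factors in $a$ are coprime in $\mathbb{Z}[q,q^{-1}][a]$ (their resultant is $1-q^{2n}$, a unit after we localize away from $[n]$, but more simply they have distinct roots $a=q^{-n}$ and $a=q^{n}$, and $q^{2n}\neq1$), their product divides $f(a)$, which is precisely \eqref{eq:q-RV1}. The one subtlety to record is that the summand $(aq;q^2)_k(q/a;q^2)_k/(q^2;q^2)_k^2$, though a priori a Laurent polynomial in $a$, is genuinely polynomial after multiplying through by an appropriate power of $a$, so the divisibility statement makes sense over a polynomial ring; alternatively one works in $\mathbb{Z}[q][a]$ with $f(a)$ replaced by $a^{n-1}f(a)$ and notes that $1-aq^n$, $a-q^n$ remain coprime there. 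I expect the main obstacle to be none of the structural steps but rather the explicit exponent/sign computation in the $q$-Chu--Vandermonde reduction, which must be carried out with care to land exactly on $(-1)^{(n-1)/2}q^{(1-n^2)/4}$.
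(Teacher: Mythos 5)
Your overall strategy --- specialize at $a=q^{-n}$ and $a=q^{n}$, evaluate the resulting terminating sum in closed form, and conclude from the coprimality of $1-aq^n$ and $a-q^n$ after clearing the powers of $a$ from the denominators --- is sound and is exactly the creative-microscoping template. The paper does not prove Theorem~\ref{thm:2.2} this way: it obtains it as the $x=1$ case of Theorem~\ref{thm:q-Tauraso}, whose proof recognizes the specialized sum as the little $q$-Legendre polynomial $P_{(n-1)/2}(x|q^2)$ and invokes the expansion \eqref{eq:pnxq-3}; your direct summation would be a legitimate and arguably simpler route for the $x=1$ case. However, the one computation you actually commit to is wrong. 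The specialized sum
$\sum_{k=0}^{(n-1)/2}\frac{(q^{1-n};q^2)_k(q^{1+n};q^2)_k}{(q^2;q^2)_k^2}$
is a ${}_2\phi_1$ in base $q^2$ with argument $z=1$, not $z=q^2$: there is no $q^{2k}$ in the summand of \eqref{eq:q-RV1} (contrast \eqref{eq:Tauraso-22}, where there is). You applied the terminating $q$-Chu--Vandermonde with argument $q$ (\cite[(II.6)]{GR}), which introduces the spurious factor $b^N=q^{(n^2-1)/2}$ and would land you on $(-1)^{(n-1)/2}q^{(n^2-1)/4}$, the reciprocal of the claimed constant; no amount of careful bookkeeping downstream can repair this. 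A concrete check at $n=3$: the sum is $1+\frac{(1-q^{-2})(1-q^{4})}{(1-q^2)^2}=-q^{-2}$, in agreement with \eqref{eq:q-RV1}, whereas your displayed evaluation gives $\frac{(q^{-2};q^2)_1}{(q^2;q^2)_1}q^{4}=-q^{2}$.

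The fix is to use the other form of $q$-Chu--Vandermonde, \cite[(II.7)]{GR}, namely ${}_2\phi_1(a,q^{-N};c;q,cq^{N}/a)=(c/a;q)_N/(c;q)_N$: here $cq^{2N}/a=q^{2}\cdot q^{n-1}\cdot q^{-(n+1)}=1$ matches the argument, and the sum equals $(q^{1-n};q^2)_{(n-1)/2}/(q^2;q^2)_{(n-1)/2}$. Your own sign-and-exponent bookkeeping, $1-q^{-m}=-q^{-m}(1-q^{m})$ taken over $m=2,4,\dots,n-1$, then gives $(q^{1-n};q^2)_{(n-1)/2}=(-1)^{(n-1)/2}q^{-(n^2-1)/4}(q^2;q^2)_{(n-1)/2}$, hence exactly $(-1)^{(n-1)/2}q^{(1-n^2)/4}$ with no extra power of $q$ to absorb. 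Two minor further slips: it is the factor $(q^{1-n};q^2)_k$, not $(q^{1+n};q^2)_k$, that truncates the sum at $k=(n-1)/2$ (the latter never vanishes), and there is no need to invoke \eqref{eq:GZ-RV} or a reduction modulo $[n]$ --- once both specializations give the same constant, divisibility by the product of the two coprime linear factors is immediate, as you say at the end.
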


The paper is organized as follows. We shall prove Theorems~\ref{thm:dnr-rs} and \ref{thm:abx} in Sections~2 and 3, respectively.
In Section~4, we shall give a more general form of Theorem~\ref{thm:2.2} (see Theorem~\ref{thm:q-Tauraso}) along with some other parameter-generalizations of \eqref{eq:GZ-RV}.

%%%%%%%%%%%%%%%%%%%%%%%%%%%%%%%%%%%%%%%%%%%%%%%%%%%%%%%%%%%%%%%%%%%%%%%%%%%%%%%%%%%%%%%%%%%%%%%%%%%%%%%%%%%%%%%%%%%%%%%%%%%%%%%%%%%%%%%%%%%%%%%%%%%%%%%%%%%%%%%%%%%%%%%%%%%%%%%%%%%%%%%%%%%%%%%%%%%%
\section{Proof of Theorem \ref{thm:dnr-rs}}
For $a=q^{-r-d\langle -r/d\rangle_n}$, the left-hand side of
\eqref{eq:eq:conj6.1-ab-gen} is equal to
\begin{align}
&\sum_{k=s}^{n-1}\frac{(q^{-r_1d};q^d)_k (bq^{d-r};q^d)_k (q;q^{2d})_k q^{dk}}{(q^d;q^d)_{k-s}(q^d;q^d)_{k+s} (bq^{2d-r_1d-r};q^{2d})_k} \notag\\[5pt]
&\quad=\frac{(q^{-r_1d};q^d)_s (bq^{d-r};q^d)_s (q;q^{2d})_s q^{dk}}{(q^d;q^d)_{2s} (bq^{2d-r_1d-r};q^{2d})_s}  \notag\\
&\qquad\times{}_{4}\phi_{3}\left[\!\!\begin{array}{c}
q^{(s-r_1)d},\, bq^{(s+1)d-r},\,q^{(s+1/2)d},\, -q^{(s+1/2)d}\\[5pt]
\sqrt{b}q^{(s+1)d-(r_1d+r)/2},\, -\sqrt{b}q^{(s+1)d-(r_1d+r)/2},\, q^{(2s+1)d}\end{array}\!\!;q^d,q^d\right], \label{eq:a-fixed}
\end{align}
where $r_1=\langle -r/d\rangle_n$ the {\it basic hypergeometric series $_{r+1}\phi_r$} is defined as
$$
_{r+1}\phi_{r}\left[\begin{array}{c}
a_1,a_2,\ldots,a_{r+1}\\
b_1,b_2,\ldots,b_{r}
\end{array};q,\, z
\right]
=\sum_{k=0}^{\infty}\frac{(a_1;q)_k(a_2;q)_k\cdots(a_{r+1};q)_k z^k}
{(q;q)_k(b_1;q)_k(b_2;q)_k\cdots(b_{r};q)_k}.
$$
If $s>r_1$, then $(q^{-r_1d};q^d)_s=0$ and so the right-hand side of \eqref{eq:a-fixed} is equal to $0$.
If $s\leqslant r_1$, then by the assumption, we have $s-r_1\equiv 1\pmod{2}$, and therefore by Andrews' terminating $q$-analogue of Watson's formula (see \cite{Andrews76} or \cite[(II.17)]{GR}):
\begin{align*} %\label{eq:andrews}
 {}_{4}\phi_{3}\left[\!\!\begin{array}{c}
q^{-n},\, a^{2}q^{n+1},\,b,\, -b\\
aq,\, -aq,\, b^{2}\end{array}\!\!;q,q\right]
=\begin{cases}
0,&\text{if $n$ is odd},\\[5pt]
\displaystyle\frac{b^{n}(q, a^{2}q^{2}/b^{2}; q^{2})_{n/2}}
{(a^{2}q^{2},\, b^{2}q; q^{2})_{n/2}},& \text{if $n$ is even,}
\end{cases}
\end{align*}
we conclude that the right-hand side of \eqref{eq:a-fixed} is still equal to $0$. This proves that
\begin{align*}
\sum_{k=s}^{n-1}\frac{(aq^r;q^d)_k (bq^{d-r};q^d)_k (q^d;q^{2d})_k q^{dk}}{(q^d;q^d)_{k-s}(q^d;q^d)_{k+s} (abq^{2d};q^{2d})_k}
\equiv 0 \pmod{(1-aq^{r+d\langle -r/d\rangle_n})}.
\end{align*}
Since $\langle -r/d\rangle_n\equiv\langle -(d-r)/d\rangle_n\pmod{2}$ for odd $n$, by symmetry we have
\begin{align*}
\sum_{k=s}^{n-1}\frac{(aq^r;q^d)_k (bq^{d-r};q^d)_k (q^d;q^{2d})_k q^{dk}}{(q^d;q^d)_{k-s}(q^d;q^d)_{k+s} (abq^{2d};q^{2d})_k}
\equiv 0 \pmod{(1-bq^{d-r+d\langle (r-d)/d\rangle_n})}.
\end{align*}
The proof of \eqref{eq:eq:conj6.1-ab-gen} then follows from the fact that the polynomials $1-aq^{r+d\langle -r/d\rangle_n}$ and $1-bq^{d-r+d\langle (r-d)/d\rangle_n}$ are relatively prime.

\section{Proof of Theorem \ref{thm:abx}}
We first establish the following lemma, which is a generalization of \cite[Lemma 5.2]{GZ14}.
\begin{lem}\label{lem:new-legendre}
Let $n$ be a positive integer and
\begin{align*}
F_n(x,b,q)=\sum_{k=0}^n \frac{(q^{-n};q)_k (b;q)_k (x;q)_k}{(q;q)_k (bq^{1-n};q^2)_k} q^k
\end{align*}
Then
\begin{align}
F_n(x,b,q)=(-1)^n F_n(-x,b,q). \label{eq:fnxq-sym}
\end{align}
\end{lem}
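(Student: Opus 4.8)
The plan is to recognize $F_n(x,b,q)$ as a terminating ${}_3\phi_2$-type series and to extract the symmetry $x\mapsto -x$ from a suitable transformation formula. Writing out the summand,
$$
F_n(x,b,q)=\sum_{k=0}^n \frac{(q^{-n};q)_k (b;q)_k (x;q)_k}{(q;q)_k (bq^{1-n};q^2)_k}\,q^k,
$$
the key observation is that the denominator factor $(bq^{1-n};q^2)_k$, a product over an arithmetic progression with common difference $q^2$, can be split as
$$
(bq^{1-n};q^2)_k=\frac{(b^{1/2}q^{(1-n)/2};q)_{2k}}{\cdots}
$$
more precisely, one writes $(bq^{1-n};q^2)_k=(\sqrt{bq^{1-n}};q)_k(-\sqrt{bq^{1-n}};q)_k$ after matching parities, so that $F_n$ becomes a balanced (or nearly balanced) ${}_4\phi_3$ with numerator parameters $q^{-n}$, $b$, $x$ together with a dummy, and denominator parameters $q$, $\pm\sqrt{bq^{1-n}}$. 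This is exactly the shape of series to which Andrews' terminating $q$-Watson formula (quoted in Section~2) applies once $x$ is specialized to $\pm\sqrt{\text{something}}$, but here we keep $x$ free, so instead I would look for a contiguous/transformation relation that exchanges the roles producing $(-x;q)_k$ in place of $(x;q)_k$.

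The cleanest route I expect is the following. First I would verify \eqref{eq:fnxq-sym} directly for small $n$ (say $n=0,1,2$) to pin down the normalization and to confirm there is no extra $q$-power; this also suggests the right transformation to invoke. Then, the substantive step: apply a known ${}_3\phi_2$ transformation — for a terminating series, one of the Sears or Jackson transformations, e.g.
$$
{}_3\phi_2\!\left[\begin{matrix}q^{-n},\,\alpha,\,\beta\\ \gamma,\,\delta\end{matrix};q,\,q\right]
=\text{(prefactor)}\times{}_3\phi_2\!\left[\begin{matrix}q^{-n},\,\alpha',\,\beta'\\ \gamma',\,\delta'\end{matrix};q,\,q\right]
$$
chosen so that the image series has $-x$ where the original had $x$, with the prefactor collapsing to $(-1)^n$ thanks to the $q^{-n}$ and the specific structure of $(bq^{1-n};q^2)_k$. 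Alternatively — and this may be the slicker argument — one shows that both sides of \eqref{eq:fnxq-sym} satisfy the same first-order recurrence in $n$ (or the same three-term contiguous relation), together with matching initial values, by a Zeilberger-type / telescoping argument on the summand. Since \cite[Lemma 5.2]{GZ14} is the $x\to$ special case and its proof presumably uses one of these mechanisms, I would adapt that proof, carrying the extra parameter $x$ through.

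The main obstacle I anticipate is algebraic rather than conceptual: getting the $q^2$-base denominator $(bq^{1-n};q^2)_k$ to interact correctly with the $q$-base numerators. One must split by the parity of $n$ and of $k$, and keep careful track of half-integer powers $\sqrt{b}\,q^{(1-n)/2}$; an honest treatment either works formally with $\sqrt{b}$ as a new indeterminate and checks that all final expressions are rational in $b$, or avoids square roots entirely by using the identity $(bq^{1-n};q^2)_k=(bq^{1-n};q^{2})_k$ together with a quadratic transformation relating base $q$ and base $q^2$. I would also need the hypothesis that we are dealing with a \emph{terminating} series (the factor $(q^{-n};q)_k$ guarantees this) so that the transformation formulas apply without convergence issues, and so that the prefactor, evaluated at $q^{-n}$, simplifies cleanly to the bare sign $(-1)^n$.
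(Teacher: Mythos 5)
Your reduction of $F_n(x,b,q)$ to a terminating basic hypergeometric series is the right starting point and matches two of the paper's three proofs: since $(\alpha;q)_k(-\alpha;q)_k=(\alpha^2;q^2)_k$ identically (no parity matching and no dummy parameter are needed -- the series is already a ${}_3\phi_2$, not a ${}_4\phi_3$), one has
\begin{align*}
F_n(x,b,q)={}_{3}\phi_{2}\left[\!\!\begin{array}{c}
x,\,b,\,q^{-n}\\
\sqrt{b}q^{(1-n)/2},\, -\sqrt{b}q^{(1-n)/2}\end{array}\!\!;q,q\right].
\end{align*}
The problem is that your proposal stops exactly where the proof has to begin: you say you would ``look for'' a Sears-type transformation that replaces $x$ by $-x$ with prefactor $(-1)^n$, but you never name one, and verifying that the prefactor is exactly $(-1)^n$ (with no residual $q$- or $b$-power) is the entire content of the lemma. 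The paper closes this gap in two concrete ways. Krattenthaler's argument applies the transformation \cite[(III.12)]{GR} to the ${}_3\phi_2$ above with the two possible orderings of the lower parameters $\pm\sqrt{b}q^{(1-n)/2}$; the two resulting series are carried into one another by $x\mapsto -x$, and the two prefactors simplify (using the symmetry of the exponents $(1-n)/2+i$, $0\leqslant i\leqslant n-1$, about $0$) to $b^{n/2}$ and $(-1)^n b^{n/2}$, which yields the sign. Ma's argument applies the $q$-Sheppard identity with $c=x$: since the product of the two lower parameters is $-bq^{1-n}$, the transformed series has upper parameters $q^{-n},x,-x$ and lower parameters $\pm xq^{(1-n)/2}/\sqrt{b}$, hence is even in $x$, and the prefactor is $x^n$ times an even function of $x$; the claim follows at once. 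Either computation is short, but it must actually be carried out -- ``some transformation should work'' is not yet an argument.

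You should also be aware that the paper's first proof is genuinely different from anything in your plan and sidesteps transformation formulas entirely: expand $(x;q)_k$ by the $q$-binomial theorem, interchange the order of summation, and evaluate the inner sum by Andrews' $q$-analogue of Gauss's ${}_2F_1(-1)$ summation; this shows that the coefficient of $x^j$ in $F_n(x,b,q)$ vanishes unless $j\equiv n\pmod 2$, from which $F_n(x,b,q)=(-1)^nF_n(-x,b,q)$ is immediate. That route may be closer in spirit to your remark that \cite[Lemma~5.2]{GZ14} should generalize. Your alternative suggestion of a Zeilberger-type recurrence is plausible but, like the transformation route, is left entirely unexecuted.
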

\begin{proof}[First Proof] Recall that the $q$-binomial theorem (see \cite[p.~36, Theorem 3.3]{Andrews}) can be stated as follows:
\begin{align*}
(x;q)_N=\sum_{k=0}^N {N\brack k}(-x)^k q^{k(k-1)/2},
\end{align*}
where the {\it $q$-binomial coefficients} ${n\brack k}$
are defined by
$$
{n\brack k}={n\brack k}_q
=\begin{cases}\displaystyle\frac{(q;q)_n}{(q;q)_k (q;q)_{n-k}} &\text{if $0\leqslant k\leqslant n$,} \\[5pt]
0 &\text{otherwise.}
\end{cases}
$$
Thus, the coefficient of $x^j$ ($0\leqslant j\leqslant n$)
in $F_n(x,b,q)$ is given by
\begin{align}
&(-1)^{j}q^{j(j-1)/2}\sum_{k=j}^{n} \frac{(q^{-n};q)_k (b;q)_k }{(q;q)_k (bq^{1-n};q^2)_k}{k\brack j} q^k \notag\\
&\quad=\frac{(-1)^{j} q^{j(j-1)/2}}{(q;q)_j}\sum_{k=j}^{n} \frac{(q^{-n};q)_k (b;q)_k }{(q;q)_{k-j} (bq^{1-n};q^2)_k} q^k  \notag \\
&\quad=\frac{(-1)^{j} q^{j(j+1)/2} (q^{-n};q)_j (b;q)_j}{(q;q)_j (bq^{1-n};q^2)_j}
\sum_{k=j}^{n} \frac{(q^{j-n};q)_{k-j} (bq^j;q)_{k-j} }{(q;q)_{k-j} (bq^{2j+1-n};q^2)_{k-j}} q^{k-j}.  \label{eq:sum-bxn}
\end{align}
Moreover, letting $a=q^{j-n}$ and $b\to bq^{j}$ in  Andrews' $q$-analogue of Gauss' $_2F_1(-1)$ sum
 (see \cite{Andrews73,Andrews74} or \cite[Appendix (II.11)]{GR}):
\begin{align*}
\sum_{k=0}^\infty \frac{(a;q)_k(b;q)_k q^{k(k+1)/2}}{(q;q)_k(abq;q^2)_k}
=\frac{(aq;q^2)_\infty (bq;q^2)_\infty}{(q;q^2)_\infty (abq;q^2)_\infty},
\end{align*}
we get
\begin{align}
\sum_{k=0}^{n-j} \frac{(q^{j-n};q)_{k} (bq^j;q)_{k}q^{k(k+1)/2}}
{(q;q)_{k} (bq^{2j+1-n};q^2)_{k}}
&=\frac{(q^{j+1-n};q^2)_\infty (bq^{j+1};q^2)_\infty}{(q;q^2)_\infty (bq^{2j+1-n};q^2)_\infty} \notag \\
&=\begin{cases}
\displaystyle \frac{(q^{j+1-n};q^2)_{(n-j)/2}}{(bq^{2j+1-n};q^2)_{(n-j)/2}}, &\text{if $j\equiv n\pmod 2,$}\\[10pt]
0, &\text{otherwise.}  \label{eq:sum-terminate}
\end{cases}
\end{align}
Finally, replacing $b$ and $q$ by $b^{-1}$ and $q^{-1}$ respectively in \eqref{eq:sum-terminate} and making some simplifications, we see that the summation on the right-hand side of \eqref{eq:sum-bxn}
is equal to $0$ for $j\not\equiv n\pmod{2}$.
 This proves \eqref{eq:fnxq-sym}.
\end{proof}

\begin{proof}[Second proof] (By Christian Kratthenthaler, personal communication.) It is easy to see that
\begin{align}
F_n(x,b,q)=
{}_{3}\phi_{2}\left[\!\!\begin{array}{c}
x,\,b,\,q^{-n},\\
\sqrt{b}q^{(1-n)/2},\, -\sqrt{b}q^{(1-n)/2}\end{array}\!\!;q,q\right].  \label{eq:3phi2}
\end{align}
Applying the transformation formula (see \cite[Appendix (III.12)]{GR})
\begin{align*}
{}_{3}\phi_{2}\left[\!\!\begin{array}{c}
b,\,c,\,q^{-n},\\
d,\, e\end{array}\!\!;q,q\right]
=c^n\frac{(e/c;q)_n}{(e;q)_n}
{}_{3}\phi_{2}\left[\!\!\begin{array}{c}
q^{-n},\, c,\, d/b\\
d,\, cq^{1-n}/e\end{array}\!\!;q,\frac{bq}{e}\right],
\end{align*}
we obtain
\begin{align*}
F_n(x,b,q)&=\frac{b^n (-q^{(1-n)/2}/\sqrt{b};q)_n}{(-\sqrt{b}q^{(1-n)/2};q)_n}
{}_{3}\phi_{2}\left[\!\!\begin{array}{c}
q^{-n},\, b,\, \sqrt{b}q^{(1-n)/2}/x\\
\sqrt{b}q^{(1-n)/2},\, -\sqrt{b}q^{(1-n)/2}\end{array}\!\!;q,-\frac{q^{(1+n)/2}x}{\sqrt{b}}\right]\\[5pt]
&=b^{n/2}
{}_{3}\phi_{2}\left[\!\!\begin{array}{c}
q^{-n},\, b,\, \sqrt{b}q^{(1-n)/2}/x\\
\sqrt{b}q^{(1-n)/2},\, -\sqrt{b}q^{(1-n)/2}\end{array}\!\!;q,-\frac{q^{(1+n)/2}x}{\sqrt{b}}\right].
\end{align*}

If we first interchange the bottom parameters in the basic hypergeometric series \eqref{eq:3phi2} and then
apply the above transformation formula, we get
\begin{align*}
F_n(x,b,q)&=\frac{b^n (q^{(1-n)/2}/\sqrt{b};q)_n}{(\sqrt{b}q^{(1-n)/2};q)_n}
{}_{3}\phi_{2}\left[\!\!\begin{array}{c}
q^{-n},\, b,\, -\sqrt{b}q^{(1-n)/2}/x\\
-\sqrt{b}q^{(1-n)/2},\, \sqrt{b}q^{(1-n)/2}\end{array}\!\!;q,\frac{q^{(1+n)/2}x}{\sqrt{b}}\right]\\[5pt]
&=(-1)^n b^{n/2}
{}_{3}\phi_{2}\left[\!\!\begin{array}{c}
q^{-n},\, b,\, -\sqrt{b}q^{(1-n)/2}/x\\
\sqrt{b}q^{(1-n)/2},\, -\sqrt{b}q^{(1-n)/2}\end{array}\!\!;q,\frac{q^{(1+n)/2}x}{\sqrt{b}}\right].
\end{align*}
The symmetry/antisymmetry under the replacement of $x$ by $-x$ is now obvious.
\end{proof}

\begin{proof}[Third proof] (By Xinrong Ma, personal communication.) We need the $q$-Sheppard identity
\begin{align*}
{}_{3}\phi_{2}\left[\!\!\begin{array}{c}
q^{-n},\, b,\, c\\
d,\,e\end{array}\!\!;q,q\right]
=c^n \frac{(d/c;q)_n (e/c;q)_n}{(d;q)_n (e;q)_n}
{}_{3}\phi_{2}\left[\!\!\begin{array}{c}
q^{-n},\, c,\, bcq^{1-n}/de\\
cq^{1-n}/d,\,cq^{1-n}/e\end{array}\!\!;q,q\right],
\end{align*}
which is a combination of the transformation formulas \cite[(III.12)]{GR} and \cite[(III.13)]{GR}. Applying this identity with $c=x$,
we can write $F_n(x,b,q)$ as
\begin{align*}
x^n\frac{(\sqrt{b}q^{(1-n)/2}/x;q)_n (-\sqrt{b}q^{(1-n)/2}/x;q)_n}{(\sqrt{b}q^{(1-n)/2};q)_n (-\sqrt{b}q^{(1-n)/2};q)_n}
{}_{3}\phi_{2}\left[\!\!\begin{array}{c}
q^{-n},\, x,\, -x\\
xq^{(1-n)/2}/\sqrt{b},\,-xq^{(1-n)/2}/\sqrt{b}\end{array}\!\!;q,q\right].
\end{align*}
This again proves \eqref{eq:fnxq-sym}.
\end{proof}

\noindent{\it Remark. } A direct proof of Corollary~\ref{conj:3.3} is as follows.
For $a=q^{-r-d\langle -r/d\rangle_n}$ or $a=q^{d-r+d\langle (r-d)/d\rangle_n}$, the left-hand side of \eqref{eq:conj3.3-a} is equal to
\begin{align*}
{}_3\phi_2\left[\begin{matrix}q^{-d\langle -r/d\rangle_n},\,q^{d-d\langle -r/d\rangle_n},\,-1\\q^d,\,-q^d\end{matrix}\,;\,q^d,\,q^d\right]
=(-1)^{\langle -r/d\rangle_n}
\end{align*}
by the $q$-Pfaff-Saalsch\"utz summation formula (see \cite[(II.12)]{GR}):
\begin{align*}
{}_3\phi_2\left[\begin{matrix}q^{-n},\,a,\,b\\c,abc^{-1}q^{1-n}\end{matrix}\,;\,q,\,q\right]
=\frac{(c/a;q)_n (c/b;q)_n}{(c;q)_n (c/ab;q)_n}.
\end{align*}

It is easy to see that Corollary \ref{cor-1} follows from Theorem \ref{thm:abx} and the following proposition.
\begin{prop}\label{prop:dnr}
Let $d$, $n$ and $r$ be positive integers with $r<d$ and $\gcd(d,n)=1$. Then $r+d\langle -r/d\rangle_n=n\langle r/n\rangle_d$.
\end{prop}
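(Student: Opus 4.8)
\noindent\emph{Proof proposal.} The plan is to translate the claimed identity into a statement about a single multiple of $n$ that is forced to lie in a prescribed interval, and then to identify that multiple by a congruence modulo $d$.

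First I would set $r_1:=\langle -r/d\rangle_n$. Since $\gcd(d,n)=1$, the element $d$ is invertible modulo $n$, so $r_1$ is the unique integer in $\{0,1,\dots,n-1\}$ with $dr_1\equiv -r\pmod n$; equivalently, $r+dr_1\equiv 0\pmod n$. Hence $r+dr_1=mn$ for some integer $m$, and $m\geqslant 1$ because $r\geqslant 1$ and $dr_1\geqslant 0$.

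Next I would pin down $m$ by two observations. From $r\leqslant d-1$ and $r_1\leqslant n-1$ we get $mn=r+dr_1\leqslant(d-1)+d(n-1)=dn-1$, so $m\leqslant d-1$; this is the one place the hypothesis $r<d$ is used, and without it $m$ could be too large to be a least non-negative residue modulo $d$. On the other hand, reducing $r+dr_1=mn$ modulo $d$ gives $mn\equiv r\pmod d$, and since $\gcd(n,d)=1$ the integer $n$ is invertible modulo $d$, so $m$ is congruent modulo $d$ to the residue represented by $r/n$. Combined with $1\leqslant m\leqslant d-1$, this forces $m=\langle r/n\rangle_d$ (note $\langle r/n\rangle_d\neq 0$ since $1\leqslant r\leqslant d-1$, so this is consistent with $m\geqslant 1$).

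Putting these together yields $r+d\langle -r/d\rangle_n=r+dr_1=mn=n\langle r/n\rangle_d$, which is the assertion. The argument is entirely elementary; the only subtle point is the two-sided bound on $m$, and the harder inequality $m\leqslant d-1$ is precisely where $r<d$ enters.
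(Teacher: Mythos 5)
Your proof is correct and is essentially the paper's own argument run in the mirror direction: the paper starts from $a=\langle r/n\rangle_d$, defines $b$ by $an-r=bd$, and identifies $b=\langle -r/d\rangle_n$ via the same two-sided bound and congruence, whereas you start from $r_1=\langle -r/d\rangle_n$ and identify the quotient $m$ as $\langle r/n\rangle_d$. The key ingredients (the bound forced by $r<d$ and the invertibility coming from $\gcd(d,n)=1$) are identical, so no further comment is needed.
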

\begin{proof}
Suppose that $\langle r/n\rangle_d=a$. Namely, $r/n\equiv a\pmod{d}$ and $1\leqslant a<d$. Therefore, there exists an integer $b$ such that
$an-r=bd$. Since $r<d$, we have $an-r>-d$ and so $b\geqslant 0$. Moreover, we have $-r/d\equiv b\pmod{n}$ and  $b=(an-r)/d<n$. This proves that
$\langle -r/d\rangle_n=b$. That is, the desired identity holds.
\end{proof}

%%%%%%%%%%%%%%%%%%%%%%%%%%%%%%%%%%%%%%%%%%%%%%%%%%%%%%%%%%%%%%%%%%%%%%%%%%%%%%%%%%%%%%%%%%%%%%%%%%%%%%%%%%%%%%%%%%%%%%%%
\section{More $q$-congruences with parameters}
We first give a generalization of Theorem \ref{thm:2.2} (corresponding to $x=1$), which is also a generalization of
a theorem of Guo and Zeng \cite[Theorem 2.1]{GZ14} (corresponding to $a=1$) and therefore a generalization of a result of Tauraso \cite{Tauraso}.

\begin{thm}\label{thm:q-Tauraso}
Let $n$ be a positive odd integer. Then, modulo $(1-aq^n)(a-q^n)$,
\begin{align}
\sum_{k=0}^{n-1}\frac{(aq;q^2)_k (q/a;q^2)_k}{(q^2;q^2)_k^2} x^k
\equiv
\sum_{k=0}^{(n-1)/2}{(n-1)/2\brack k}_{q^2}^2 q^{k^2-nk} (-x)^k (x;q^2)_{(n-1)/2-k}.
\label{eq:q-Tauraso}
\end{align}
\end{thm}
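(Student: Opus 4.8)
The plan is to run the creative-microscoping argument, exactly as was done for \eqref{q4a-new}. Since the polynomials $1-aq^n$ and $a-q^n$ are relatively prime, it suffices to prove \eqref{eq:q-Tauraso} modulo each of them separately. Here there is a shortcut: the left-hand side of \eqref{eq:q-Tauraso} is invariant under $a\mapsto 1/a$ (the factors $(aq;q^2)_k$ and $(q/a;q^2)_k$ simply trade places), while the right-hand side does not involve $a$ at all. Since the substitution $a=q^{-n}$ kills $1-aq^n$ and $a=q^n$ kills $a-q^n$, and since $a\mapsto 1/a$ interchanges these two specializations, it is enough to prove the congruence modulo $1-aq^n$, i.e. to check equality after setting $a=q^{-n}$.

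Substituting $a=q^{-n}$ turns the factor $(aq;q^2)_k$ into $(q^{1-n};q^2)_k$, which vanishes once $k\geqslant (n+1)/2$ because $n$ is odd (the factor $1-q^{1-n+2\cdot(n-1)/2}=1-q^0$ appears); hence the sum on the left truncates at $k=(n-1)/2$. Writing $Q=q^2$ and $m=(n-1)/2$, and using $q^{k^2-nk}=Q^{\binom k2-mk}$ (the exponent $k^2-nk=k(k-n)$ is even), the theorem reduces to the polynomial identity
$$
\sum_{k=0}^{m}\frac{(Q^{-m};Q)_k(Q^{m+1};Q)_k}{(Q;Q)_k^2}\,x^k
=\sum_{k=0}^{m}{m\brack k}_Q^2\,Q^{\binom k2-mk}(-x)^k\,(x;Q)_{m-k},
$$
which is the $a=1$ case of \eqref{eq:q-Tauraso} up to the harmless truncation already present on the left after the specialization, and for $x=1$ collapses to the right-hand side of \eqref{eq:q-RV1}.

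To establish this identity I would expand $(x;Q)_{m-k}$ on the right by the $q$-binomial theorem and compare the coefficients of $x^{\ell}$ on both sides, for each $0\leqslant \ell\leqslant m$. Using the elementary relation ${m\brack k}_Q{m-k\brack \ell-k}_Q={m\brack \ell}_Q{\ell\brack k}_Q$, the evaluation $(Q^{-m};Q)_\ell/(Q;Q)_\ell={m\brack \ell}_Q(-1)^\ell Q^{\binom\ell2-m\ell}$, and $\binom k2+\binom{\ell-k}2=\binom\ell2-k(\ell-k)$ to gather the powers of $Q$, the equality of the $x^{\ell}$-coefficients collapses to
$$
\sum_{k=0}^{\ell}{m\brack k}_Q{\ell\brack k}_Q\,Q^{(m-k)(\ell-k)}={m+\ell\brack \ell}_Q,
$$
which is a special case of the $q$-Chu--Vandermonde summation (the $b=q^{-m}$, $c=q$ instance of \cite[(II.7)]{GR}). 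This shows the two sides agree as polynomials in $x$, completing the reduction and hence the proof.

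I expect the only genuinely delicate point to be the bookkeeping in the last step, namely reassembling the various powers of $Q$ (and the signs) so that the double sum over $k$ is visibly recognized as $q$-Vandermonde; the rest is the by-now routine splitting-and-specialization machinery together with the symmetry $a\mapsto 1/a$. An alternative I would keep in reserve, in case the coefficient comparison becomes unwieldy, is to recognize the terminating ${}_2\phi_1$ on the left as a little $q$-Legendre polynomial and to derive the claimed expansion from a single Heine or Sears transformation; but the coefficient-extraction route is the most transparent and self-contained, and it is also the one that keeps the $x$-dependence explicit.
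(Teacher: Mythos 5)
Your proposal is correct, and its skeleton --- split the modulus into the coprime factors $1-aq^n$ and $a-q^n$, specialize $a=q^{\mp n}$, observe that $(q^{1-n};q^2)_k$ truncates the sum at $k=(n-1)/2$, and verify the resulting polynomial identity in $x$ --- is exactly the paper's. The only real difference is the last step. The paper recognizes the specialized left-hand side as the little $q$-Legendre polynomial $P_{(n-1)/2}(\cdot\,|q^2)$ in its standard form and simply quotes the Guo--Zeng expansion \eqref{eq:pnxq-3} from \cite[Lemma~4.1]{GZ14}, which is precisely the identity you reduce to; you instead prove that identity from scratch by extracting the coefficient of $x^\ell$ and appealing to $q$-Chu--Vandermonde. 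I checked your bookkeeping: with $Q=q^2$ and $m=(n-1)/2$ the powers of $Q$ do assemble so that equality of $x^\ell$-coefficients becomes
\begin{equation*}
\sum_{k=0}^{\ell}{m\brack k}_Q{\ell\brack k}_Q\,Q^{(m-k)(\ell-k)}={m+\ell\brack \ell}_Q,
\end{equation*}
which is a correct form of the $q$-Vandermonde summation, so your argument closes. Your $a\mapsto 1/a$ symmetry shortcut, reducing the two specializations to one, is also valid and is implicitly what the paper does (``For $a=q^{-n}$ or $a=q^n$, the left-hand side \dots''). What your route buys is self-containedness (no appeal to the external lemma); what the paper's buys is brevity and the conceptual identification with $P_m(x|q^2)$, which it then reuses to obtain Theorem~\ref{thm:vq-Tauraso} from the second standard expansion of the same polynomial.
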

\begin{proof}Recall that the little $q$-Legendre polynomials are defined by
\begin{align*}
P_n(x|q)=\sum_{k=0}^{n}{n\brack k}{n+k\brack k} q^{k(k+1)/2-nk} (-x)^k,
\end{align*}
which can also be written as (see \cite{VAssche})
\begin{align*}
P_n(x|q)=(-1)^n q^{-\frac{n(n+1)}{2}}\sum_{k=0}^{n}{n\brack k}{n+k\brack k}(-1)^k q^{k(k+1)/2-nk} (xq;q)_k.
\end{align*}
Guo and Zeng \cite[Lemma 4.1]{GZ14} gave a new expansion for the little $q$-Legendre polynomials:
\begin{align}
P_n(x|q)=\sum_{k=0}^{n}{n\brack k}^2 q^{k(k+1)/2-nk} (-x)^k (xq;q)_{n-k}. \label{eq:pnxq-3}
\end{align}

For $a=q^{-n}$ or $a=q^n$, the left-hand side of \eqref{eq:q-Tauraso} is equal to
\begin{align*}
\sum_{k=0}^{(n-1)/2}\frac{(q^{1-n};q^2)_k (q^{n+1};q^2)_k}{(q^2;q^2)_k^2} x^k
=P_{(n-1)/2}(x|q^2),
\end{align*}
which is the right-hand side of \eqref{eq:q-Tauraso} by \eqref{eq:pnxq-3}. This proves the congruence \eqref{eq:q-Tauraso}.
\end{proof}

Similarly, by \eqref{eq:pnxq-3}, we obtain another generalization of \eqref{eq:q-RV1}.
It is also a parameter-generalization of another theorem of Guo and Zeng \cite[Theorem 2.3]{GZ14}.
\begin{thm}\label{thm:vq-Tauraso}
Let $n$ be a positive odd integer. Then, modulo $(1-aq^n)(a-q^n)$,
\begin{align}
\sum_{k=0}^{n-1}\frac{(aq;q^2)_k (q/a;q^2)_k}{(q^2;q^2)_k^2} x^k
&\equiv (-1)^{(n-1)/2}q^{(1-n^2)/4}
\sum_{k=0}^{n-1}\frac{(aq;q^2)_k (q/a;q^2)_k}{(q^2;q^2)_k^2} q^{2k}(x;q^2)_k. \label{eq:q2-RV1}
\end{align}
\end{thm}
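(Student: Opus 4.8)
The plan is to follow the proof of Theorem~\ref{thm:q-Tauraso} essentially verbatim. After multiplying \eqref{eq:q2-RV1} through by $a^{n-1}$, both sides are polynomials in $a$, and since $1-aq^n$ and $a-q^n$ are coprime it suffices to prove \eqref{eq:q2-RV1} modulo each factor separately, i.e. to verify it at $a=q^{-n}$ and at $a=q^{n}$. In both cases $(aq;q^2)_k(q/a;q^2)_k=(q^{1-n};q^2)_k(q^{n+1};q^2)_k$, which vanishes for $k>(n-1)/2=:m$; hence both sums in \eqref{eq:q2-RV1} terminate at $k=m$. Writing
\begin{align*}
\Pi(x):=\sum_{k=0}^{m}\frac{(q^{1-n};q^2)_k(q^{n+1};q^2)_k}{(q^2;q^2)_k^2}\,x^k
\end{align*}
for the common value, at $a=q^{\pm n}$, of the left-hand sides of \eqref{eq:q-Tauraso} and \eqref{eq:q2-RV1}, the theorem reduces to the polynomial identity
\begin{align*}
\Pi(x)=(-1)^{m}q^{(1-n^2)/4}\sum_{k=0}^{m}\frac{(q^{1-n};q^2)_k(q^{n+1};q^2)_k}{(q^2;q^2)_k^2}\,q^{2k}(x;q^2)_k .
\end{align*}

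To prove this I would identify $\Pi(x)$ as an evaluation of the little $q$-Legendre polynomial $P_m(\,\cdot\,|q^2)$ --- this is exactly the identification already made, via the hypergeometric form of $P_m$, in the proof of Theorem~\ref{thm:q-Tauraso} --- and then apply to that same polynomial the \emph{second} expansion of $P_n$ recorded in that proof,
\begin{align*}
P_n(x|q)=(-1)^n q^{-n(n+1)/2}\sum_{k=0}^{n}{n\brack k}{n+k\brack k}(-1)^k q^{k(k+1)/2-nk}(xq;q)_k ,
\end{align*}
with $q$ replaced by $q^2$ and $n$ by $m$. Replacing $x$ by $q^{-2}x$ there, so that $(xq^2;q^2)_k$ becomes $(x;q^2)_k$, and simplifying the resulting $q$-powers, I expect to obtain the clean relation
\begin{align*}
\sum_{k=0}^{m}\frac{(q^{1-n};q^2)_k(q^{n+1};q^2)_k}{(q^2;q^2)_k^2}\,q^{2k}(x;q^2)_k=(-1)^{m}q^{m(m+1)}\,\Pi(x) .
\end{align*}
Substituting this into the right-hand side of the reduced identity turns it into $(-1)^m q^{(1-n^2)/4}\cdot(-1)^m q^{m(m+1)}\Pi(x)=q^{(1-n^2)/4+m(m+1)}\Pi(x)$, which equals $\Pi(x)$ because $n=2m+1$ forces $(1-n^2)/4=-m(m+1)$. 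This proves the reduced identity and hence the theorem.

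I do not expect a genuine obstacle here: this is the proof of Theorem~\ref{thm:q-Tauraso} with the expansion \eqref{eq:pnxq-3} replaced by the $(xq;q)_k$-expansion of $P_n$. The main thing to get right is (i) choosing the correct one of the three forms of $P_n$ recalled in that proof to pair with the $(x;q^2)_k$-sum, and (ii) tracking all the powers of $q$ --- those coming from the normalizing constant $q^{-n(n+1)/2}$ in that expansion, from the factor $q^{2k}$ in the summand, and from the prefactor $q^{(1-n^2)/4}$ --- so that they cancel against one another, leaving the two $q$-Legendre evaluations equal. As an alternative that bypasses $P_m$ altogether, one can expand both $x^k$ and $(x;q^2)_k$ in powers of $x$ by the $q$-binomial theorem and compare the coefficients of $x^j$ on the two sides, which reduces the claim to a single terminating ${}_2\phi_1$ (equivalently, $q$-Chu--Vandermonde) summation, in the spirit of the first proof of Lemma~\ref{lem:new-legendre}.
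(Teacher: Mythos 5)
Your proposal is correct and is essentially the paper's own (only sketched) argument: reduce modulo the two coprime linear factors to the evaluations $a=q^{\pm n}$, recognize both sides as the little $q$-Legendre polynomial $P_{(n-1)/2}(\,\cdot\,|q^2)$ via two of its known expansions (the defining one for the left side, the $(xq;q)_k$-expansion for the right side), and cancel the $q$-powers using $(1-n^2)/4=-m(m+1)$ for $n=2m+1$. Your identification of the $(xq;q)_k$-form as the relevant expansion is the right one, and the stated intermediate identity and final cancellation both check out.
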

It is clear that, when $x=1$, the congruence \eqref{eq:q2-RV1} reduces to \eqref{eq:q-RV1}. On the other
hand, setting $x=0$ in \eqref{eq:q2-RV1}, we obtain the following dual form of \eqref{eq:q-RV1}.
\begin{cor}Let $n$ be a positive odd integer.  Then
\begin{align*}
\sum_{k=0}^{n-1}\frac{(aq;q^2)_k (q/a;q^2)_k}{(q^2;q^2)_k^2}q^{2k}
&\equiv (-1)^{(n-1)/2}q^{(n^2-1)/4}\pmod{(1-aq^n)(a-q^n)}. %\label{eq:q-RV1}
\end{align*}
\end{cor}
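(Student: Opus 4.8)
The plan is to obtain this corollary simply as the $x=0$ specialization of Theorem~\ref{thm:vq-Tauraso}, which will already be in hand. Before substituting, I would record two trivial facts about the ingredients of \eqref{eq:q2-RV1}. First, on the left-hand side the factor $x^k$ equals $0$ for every $k\geqslant 1$ once $x=0$ (with the usual convention $0^0=1$), so the entire sum collapses to its $k=0$ term, namely $1$. Second, on the right-hand side $(0;q^2)_k=\prod_{j=0}^{k-1}(1-0\cdot q^{2j})=1$ for all $k\geqslant 0$, so the factor $(x;q^2)_k$ drops out completely.

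Feeding these observations into \eqref{eq:q2-RV1} gives, modulo $(1-aq^n)(a-q^n)$,
\[
1\equiv(-1)^{(n-1)/2}q^{(1-n^2)/4}\sum_{k=0}^{n-1}\frac{(aq;q^2)_k (q/a;q^2)_k}{(q^2;q^2)_k^2}q^{2k}.
\]
Since $n$ is odd, $(n^2-1)/4$ is an integer and $(-1)^{(n-1)/2}q^{(1-n^2)/4}$ is a unit in $\mathbb{Z}[a^{\pm 1},q^{\pm 1}]$, its inverse being $(-1)^{(n-1)/2}q^{(n^2-1)/4}$. Multiplying the displayed congruence through by this inverse (which preserves congruences modulo any polynomial) yields exactly the asserted congruence. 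There is essentially no obstacle along this route; the only points needing (minimal) care are the convention $0^0=1$ and the immediate observation that the cancelled factor is a unit.

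For completeness I would also describe a self-contained alternative paralleling the proofs of Theorems~\ref{thm:q-Tauraso} and \ref{thm:vq-Tauraso}. Because $1-aq^n$ and $a-q^n$ are coprime, it suffices to verify the congruence as a genuine identity after setting $a=q^{-n}$ and (by symmetry) $a=q^{n}$. In either specialization one of $(aq;q^2)_k$, $(q/a;q^2)_k$ becomes $(q^{1-n};q^2)_k$, which vanishes for $k>(n-1)/2$, so the sum truncates to $\sum_{k=0}^{(n-1)/2}\frac{(q^{1-n};q^2)_k (q^{1+n};q^2)_k}{(q^2;q^2)_k^2}q^{2k}$. Rewriting the summand with $q^2$-binomial coefficients and comparing with the second displayed expansion of the little $q$-Legendre polynomial $P_{(n-1)/2}(x|q^2)$ identifies this sum with $(-1)^{(n-1)/2}q^{(n^2-1)/4}P_{(n-1)/2}(0|q^2)$, and $P_m(0|q^2)=1$ is read off directly from the defining formula for $P_m$. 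In this route the one mildly delicate point is keeping track of the powers of $q$ generated when $(q^{1-n};q^2)_k$ is converted into a $q^2$-binomial coefficient, but this is routine bookkeeping rather than a real obstacle.
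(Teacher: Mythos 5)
Your main argument is exactly the paper's: the corollary is obtained by setting $x=0$ in \eqref{eq:q2-RV1}, noting that the left-hand side collapses to $1$ and that $(0;q^2)_k=1$, and then multiplying through by the unit $(-1)^{(n-1)/2}q^{(n^2-1)/4}$. This is correct, and your supplementary self-contained route via the little $q$-Legendre polynomials is a valid (if unnecessary) bonus.
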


The congruence \eqref{eq:q-RV1} has the following more general form, which is a parameter-generalization of a theorem of Guo and Zeng \cite[Theorem 1.6]{GZ15}.
\begin{thm}\label{thm:2k2k-r}
Let $n$ be a positive odd integer and let $0\leqslant s\leqslant (n-1)/2$. Then
\begin{align}
\sum_{k=0}^{(n-1)/2}\frac{(aq;q^2)_k (q/a;q^2)_{k+s}}{(q^2;q^2)_k (q^2;q^2)_{k+s} }
&\equiv (-1)^{(n-1)/2}q^{(1-n^2)/4}\pmod{(1-aq^n)(a-q^n)}. \label{eq:fin-gen-s}
\end{align}
\end{thm}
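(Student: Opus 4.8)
The plan is to exploit the fact that the two moduli $1-aq^n$ and $a-q^n$ are relatively prime --- their zeros $a=q^{-n}$ and $a=q^{n}$ being distinct once $q$ is treated as an indeterminate --- together with the observation that the denominators $(q^2;q^2)_k(q^2;q^2)_{k+s}$ on the left-hand side of \eqref{eq:fin-gen-s} involve only $q$ and hence are coprime to both factors. Thus it suffices to establish \eqref{eq:fin-gen-s} modulo $1-aq^n$ and modulo $a-q^n$ separately and then invoke the Chinese remainder theorem. Throughout I write $m=(n-1)/2$, so that the target value on the right-hand side is $(-1)^m q^{-m(m+1)}$.

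For the modulus $1-aq^n$ I would put $a=q^{-n}$. Then $(aq;q^2)_k=(q^{-2m};q^2)_k$ vanishes for $k>m$, which truncates the sum exactly at the stated upper limit, while $(q/a;q^2)_{k+s}=(q^{2m+2};q^2)_{k+s}$. Splitting $(q^{2m+2};q^2)_{k+s}=(q^{2m+2};q^2)_s(q^{2m+2+2s};q^2)_k$ and $(q^2;q^2)_{k+s}=(q^2;q^2)_s(q^{2s+2};q^2)_k$, the left-hand side becomes
\[
\frac{(q^{2m+2};q^2)_s}{(q^2;q^2)_s}\,{}_{2}\phi_{1}\!\left[\!\!\begin{array}{c}q^{-2m},\,q^{2m+2+2s}\\ q^{2s+2}\end{array}\!\!;q^2,\,1\right],
\]
and since $q^{2s+2}\cdot(q^2)^m/q^{2m+2+2s}=1$, this ${}_2\phi_1$ is summable by the terminating $q$-Chu--Vandermonde identity (see \cite[Appendix (II.6)]{GR}), yielding $(q^{-2m};q^2)_m/(q^{2s+2};q^2)_m$. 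Inserting $(q^{-2m};q^2)_m=(-1)^m q^{-m(m+1)}(q^2;q^2)_m$ and telescoping $(q^{2m+2};q^2)_s$, $(q^{2s+2};q^2)_m$, and $(q^2;q^2)_m$ into $(q^2;q^2)_{m+s}/(q^2;q^2)_{m+s}=1$, the expression collapses to $(-1)^m q^{-m(m+1)}$, as required.

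For the modulus $a-q^n$ I would put $a=q^{n}$. Now $(aq;q^2)_k=(q^{2m+2};q^2)_k$, whereas the vanishing factor $(q/a;q^2)_{k+s}=(q^{-2m};q^2)_{k+s}$ kills every term with $k>m-s$, so the effective range of summation shrinks to $0\leqslant k\leqslant m-s$ --- this is precisely where the hypothesis $s\leqslant(n-1)/2$ enters. Factoring $(q^{-2m};q^2)_{k+s}=(q^{-2m};q^2)_s(q^{-2(m-s)};q^2)_k$ and proceeding as before turns the sum into a constant multiple of a terminating ${}_2\phi_1$ at unit argument, again summable by $q$-Chu--Vandermonde, this time producing $(q^{-2(m-s)};q^2)_{m-s}/(q^{2s+2};q^2)_{m-s}$. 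One then combines the prefactor $(q^{-2m};q^2)_s$ with $(q^{-2(m-s)};q^2)_{m-s}=(-1)^{m-s}q^{-(m-s)(m-s+1)}(q^2;q^2)_{m-s}$ and the surviving $q$-shifted factorials, and checks that the signs multiply to $(-1)^m$ and that the accumulated $q$-exponent
\[
s^2-s-2ms-(m-s)(m-s+1)=-m(m+1)
\]
is the right one; the outcome is once more $(-1)^m q^{-m(m+1)}$.

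Combining the two cases with the coprimality of $1-aq^n$ and $a-q^n$ proves \eqref{eq:fin-gen-s}. I expect the genuinely fiddly step to be the second evaluation: correctly locating the truncation $k=m-s$, peeling off the $k$-independent factor with the right ${}_2\phi_1$ parameters, and pushing through the exponent arithmetic displayed above (the first case is essentially the same computation with $s$ absorbed differently). As consistency checks, setting $s=0$ reduces \eqref{eq:fin-gen-s} to the truncation of \eqref{eq:q-RV1} --- and indeed the extra terms vanish at $a=q^{\pm n}$ --- while letting $a\to1$ recovers Guo and Zeng's \cite[Theorem 1.6]{GZ15}.
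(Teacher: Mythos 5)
Your proof is correct and follows essentially the same strategy as the paper's: specialize $a=q^{\mp n}$, observe that one of the $q$-shifted factorials truncates the sum, evaluate the resulting terminating series in closed form, and conclude from the coprimality of $1-aq^n$ and $a-q^n$. The only cosmetic difference is that the paper evaluates the specialized sum via the $q$-binomial identity \cite[(5.1)]{GZ15} (and dispatches the case $a=q^n$ with ``similarly''), whereas you invoke the $q$-Chu--Vandermonde summation directly and carry out both cases explicitly --- for the unit-argument form you are using, the precise reference is \cite[Appendix (II.7)]{GR} rather than (II.6), but the two are equivalent, and your sign and exponent bookkeeping in both evaluations checks out.
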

\begin{proof}For $a=q^{-n}$, the left-hand side of \eqref{eq:fin-gen-s} can be written as
\begin{align}
\sum_{k=0}^{(n-1)/2}\frac{(q^{1-n};q^2)_k (q^{n+1};q^2)_{k+s}}{(q^2;q^2)_k (q^2;q^2)_{k+s} }
=\sum_{k=0}^{(n-1)/2}(-1)^k {(n-1)/2\brack k}_{q^2}{(n-1)/2+k+s\brack (n-1)/2}q^{k^2-nk}.  \label{eq:sum-nks}
\end{align}
By the easily proved identity \cite[(5.1)]{GZ15}:
\begin{align*}
\sum_{k=0}^N (-1)^k {N\brack k} {M+k\brack N} q^{{k\choose 2}-Nk}
=(-1)^N q^{-{N+1\choose 2}}\quad\text{for $0\leqslant s\leqslant N$},
\end{align*}
we see that the right-hand side of \eqref{eq:sum-nks} is equal to $(-1)^{(n-1)/2}q^{(1-n^2)/4}$. This proves that the congruence
\eqref{eq:fin-gen-s} holds modulo $(1-aq^n)$. Similarly, we can show that it also holds modulo $a-q^n$.
\end{proof}

Let us now turn to another $q$-congruence story. In 2010, Sun and Tauraso \cite[Corollary 1.1]{ST2} proved that
\begin{align}
\sum_{k=0}^{p^{r}-1}\frac{1}{2^k}{2k\choose k}\equiv (-1)^{(p^r-1)/2}\pmod{p}, \label{eq:sun-tauraso}
\end{align}
where $p$ is an odd prime and $r$ is a positive integer. In the same year, Sun \cite{Sun2010} further showed that the above congruence holds modulo $p^2$.
Still in 2010, the author and Zeng \cite[Corollary 4.2]{GZ10} gave the following $q$-analogue of \eqref{eq:sun-tauraso}:
\begin{align}
\sum_{k=0}^{n-1}\frac{q^k}{(-q;q)_{k}}{2k\brack k}\equiv (-1)^{(n-1)/2}q^{(n^2-1)/4} \pmod{\Phi_n(q)}, \label{eq:guo-zeng-1}
\end{align}
where $n$ is a positive odd integer. In 2013, Tauraso \cite{Tauraso2} proved some $q$-supercongruences and conjectures that \eqref{eq:guo-zeng-1} is also true modulo $\Phi_n(q)^2$ for
any odd prime $n$.

It is natural that Tauraso's conjecture can be generalized as follows.
\begin{conj}\label{thm:Tauraso-1}
Let $n$ be a positive odd integer. Then
\begin{align}
\sum_{k=0}^{n-1}\frac{q^k}{(-q;q)_{k}}{2k\brack k}\equiv (-1)^{(n-1)/2}q^{(n^2-1)/4} \pmod{\Phi_n(q)^2}. \label{eq:Tauraso-1}
\end{align}
\end{conj}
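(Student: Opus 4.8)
I would attack \eqref{eq:Tauraso-1} by creative microscoping, exactly as in the rest of the paper. The first step is the elementary identity
\[
\frac{q^k}{(-q;q)_{k}}{2k\brack k}=\frac{(q;q^2)_k}{(q;q)_k}\,q^k,
\]
which is immediate from $(q;q)_{2k}=(q;q^2)_k(q^2;q^2)_k$ and $(q^2;q^2)_k=(q;q)_k(-q;q)_k$. The plan is then to find an $a$-deformation $\mathcal S(a,q)$ of the resulting sum, with $\mathcal S(1,q)$ equal to it, together with an explicit rational function $G(a,q)$ that is free of the factor $\Phi_n(q)$ in its denominator and satisfies $G(1,q)=(-1)^{(n-1)/2}q^{(n^2-1)/4}$, such that
\[
\mathcal S(a,q)\equiv G(a,q)\pmod{(1-aq^n)(a-q^n)}.
\]
Granting this, the conjecture follows at once: since $1-aq^n$ and $a-q^n$ are coprime, it suffices to prove the congruence modulo each factor separately, i.e.\ to evaluate $\mathcal S(q^{-n},q)$ and $\mathcal S(q^{n},q)$ in closed form and check that they equal $G(q^{-n},q)$ and $G(q^{n},q)$; then letting $a\to1$ replaces the modulus by $(1-q^n)^2$, which is divisible by $\Phi_n(q)^2$, while every denominator $(q;q)_k$ with $k\leqslant n-1$ remains coprime to $\Phi_n(q)$, so $\mathcal S(1,q)\equiv G(1,q)\pmod{\Phi_n(q)^2}$, which is \eqref{eq:Tauraso-1}. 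This is the very mechanism by which Corollaries~\ref{thm:conj6.1} and \ref{conj:3.3} are deduced from Theorems~\ref{thm:dnr-rs} and \ref{thm:abx}.

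The real difficulty is to choose the deformation so that \emph{both} specialisations close up. The simplest candidate, $\mathcal S(a,q)=\sum_{k=0}^{n-1}\frac{(aq;q^2)_k}{(q;q)_k}q^k$, does half the job: at $a=q^{-n}$ the factor $(q^{1-n};q^2)_k$ kills the terms with $k>(n-1)/2$, and the resulting terminating sum is readily evaluated to $(-1)^{(n-1)/2}q^{-(n-1)^2/4}$, which is precisely $G(q^{-n},q)$ for the natural guess $G(a,q)=(-1)^{(n-1)/2}a^{(n-1)/2}q^{(n^2-1)/4}$; this already reproves the modulo-$\Phi_n(q)$ congruence \eqref{eq:guo-zeng-1}. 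But at the other wall $a=q^{n}$ the factor $(q^{1+n};q^2)_k$ does not terminate, $\mathcal S(q^{n},q)$ is not a monomial (one sees this already at $n=3$), and $\mathcal S(q^n,q)\ne G(q^n,q)$; so this naive deformation cannot yield the square of $\Phi_n(q)$. I expect one needs a more symmetric, multi-parameter deformation whose two walls coincide --- for instance replacing the summand by $\frac{(aq;q^2)_k(q/a;q^2)_k}{(q;q)_k(q;q^2)_k}q^k$, which still equals $\frac{(q;q^2)_k}{(q;q)_k}q^k$ at $a=1$ and turns the specialisations at $a=q^{\pm n}$ into the same terminating ${}_3\phi_2$ --- or, more promisingly, one first proves a master identity carrying an extra free variable $x$, in the spirit of the passage from \eqref{eq:q-RV1} to \eqref{eq:q-Tauraso}, whose richer structure tends to remove exactly this obstruction. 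Identifying the right deformation, locating the classical terminating summation or transformation that evaluates it (a ${}_3\phi_2$ or ${}_4\phi_3$ evaluation such as Andrews' $q$-analogue of Watson's theorem used in Section~2, or one of the ${}_3\phi_2$ transformations behind the three proofs of Lemma~\ref{lem:new-legendre}), and then verifying that the emerging $G(a,q)$ really is $\Phi_n(q)$-free in the denominator with $G(1,q)=(-1)^{(n-1)/2}q^{(n^2-1)/4}$: this is the crux, and the step I expect to be hardest.

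If the microscoping route stalls, a fallback is to $q$-ify the classical generating-function proof of \eqref{eq:sun-tauraso} --- a partial-sum identity for $\sum\binom{2k}{k}x^k$ combined with a $q$-Wolstenholme congruence for $\binom{2n}{n}_q$ modulo $\Phi_n(q)^2$ --- or to revisit Tauraso's WZ-pair proof for prime $n$ and check that its certificate telescopes correctly for every odd $n$; but the creative-microscoping approach is by far the most natural here. Before fixing a deformation I would verify \eqref{eq:Tauraso-1} numerically for $n=3,5,7,9$ to pin down the exact shape of $G$.
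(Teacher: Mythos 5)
There is a genuine gap: your write-up is a strategy outline whose decisive step --- producing a parametric deformation that closes up at \emph{both} walls --- is never carried out, and you say so yourself (``this is the crux, and the step I expect to be hardest''). In fact the paper does not prove this statement either: it is stated as a Conjecture, and its proof is attributed to the separate reference \cite{GuoIJNT}; what the paper proves here is only the parameter-generalization \eqref{eq:Tauraso-2} of the \emph{weaker} congruence \eqref{eq:guo-zeng-1}. That theorem also explains why the route you propose is obstructed in a way you only partially diagnose: after passing to the equivalent form \eqref{eq:Tauraso-11}, the summand is $\frac{(q;q^2)_k(-q;q^2)_kq^{2k}}{(q^2;q^2)_k}$, and the natural symmetric deformation $\frac{(aq;q^2)_k(-q/a;q^2)_kq^{2k}}{(q^2;q^2)_k}$ has its two walls at $a=q^{-n}$ and $a=-q^{n}$, giving the modulus $(1-aq^n)(a+q^n)$ of \eqref{eq:Tauraso-2}. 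At $a=1$ this degenerates to $1-q^{2n}$, which (for odd $n$) is divisible by $\Phi_n(q)$ only to the first power, so creative microscoping in this form provably cannot reach $\Phi_n(q)^2$.

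Your proposed repair, the summand $\frac{(aq;q^2)_k(q/a;q^2)_k}{(q;q)_k(q;q^2)_k}q^k$, does not work either: for $k\geqslant(n+1)/2$ the denominator factor $(q;q^2)_k$ contains $1-q^{n}$ and hence is divisible by $\Phi_n(q)$, so the standard limiting argument (``all denominators remain coprime to $\Phi_n(q)$ as $a\to1$'') collapses, and you cannot pass from the congruence modulo $(1-aq^n)(a-q^n)$ to one modulo $\Phi_n(q)^2$. The closely related deformation that \emph{does} have coprime denominators is \eqref{eq:Tauraso-22}, but it converges as $a\to1$ to the sum with $(q;q^2)_k^2$ rather than $(q;q^2)_k(-q;q^2)_k$, i.e.\ to a different congruence. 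So none of the candidates you name yields \eqref{eq:Tauraso-1}; the conjecture genuinely requires a different argument (the one in \cite{GuoIJNT}), and your proposal as it stands does not constitute a proof.
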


It is easy to see that, for odd $n$, $\Phi_n(q^2)=\Phi_n(q)\Phi_n(-q)$, and therefore, \eqref{eq:Tauraso-1} is equivalent to
\begin{align}
\sum_{k=0}^{n-1}\frac{q^{2k}}{(-q^2;q^2)_{k}}{2k\brack k}_{q^2}\equiv (-1)^{(n-1)/2}q^{(n^2-1)/2} \pmod{\Phi_n(q)^2},  \label{eq:Tauraso-11}
\end{align}
by noticing that the left-hand side of \eqref{eq:Tauraso-11} remains unchanged when replacing $q$ by $-q$.
Moreover, we have
\begin{align*}
\frac{q^{2k}}{(-q^2;q^2)_{k}}{2k\brack k}_{q^2}
=\frac{(q^2;q^4)_k q^{2k}}{(q^2;q^2)_k}
=\frac{(q;q^2)_k (-q;q^2)_k q^{2k}}{(q^2;q^2)_k}.
\end{align*}
Conjecture \ref{thm:Tauraso-1} has recently been confirmed by the author \cite{GuoIJNT}.  Here we shall give the following parameter-generalization \eqref{eq:Tauraso-2}  of its weaker form \eqref{eq:guo-zeng-1}.
\begin{thm}\label{thm:Tauraso-2}
Let $n$ be a positive odd integer. Then
\begin{align}
\sum_{k=0}^{n-1}\frac{(aq;q^2)_k (-q/a;q^2)_k q^{2k}}{(q^2;q^2)_k} &\equiv (-1)^{(n-1)/2} q^{(n^2-1)/2} \pmod{(1-aq^n)(a+q^n)},\label{eq:Tauraso-2}  \\
\sum_{k=0}^{n-1}\frac{(aq;q^2)_k (q/a;q^2)_k q^{2k}}{(q^2;q^2)_k} &\equiv q^{(n^2-1)/2} \pmod{(1-aq^n)(a-q^n)}.\label{eq:Tauraso-22}
\end{align}
\end{thm}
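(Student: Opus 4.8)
The plan is to repeat the creative-microscoping pattern already used above for Theorems~\ref{thm:q-Tauraso}, \ref{thm:vq-Tauraso} and \ref{thm:2k2k-r}. Since the zeros $q^{-n}$ and $-q^n$ of $1-aq^n$ and $a+q^n$ are distinct, these two polynomials in $a$ are relatively prime, so it suffices to establish \eqref{eq:Tauraso-2} modulo $1-aq^n$ and modulo $a+q^n$ separately; likewise \eqref{eq:Tauraso-22} will follow from the two congruences modulo the coprime polynomials $1-aq^n$ and $a-q^n$.

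For \eqref{eq:Tauraso-2} modulo $1-aq^n$ I would set $a=q^{-n}$. Then $(aq;q^2)_k=(q^{1-n};q^2)_k$ contains the vanishing factor $1-q^{1-n+2k}$ as soon as $k>(n-1)/2$, so the sum truncates at $k=(n-1)/2$ and equals the terminating series
\[
\sum_{k=0}^{(n-1)/2}\frac{(q^{1-n};q^2)_k\,(-q^{n+1};q^2)_k}{(q^2;q^2)_k}\,q^{2k}
={}_{2}\phi_{1}\!\left[\begin{matrix}q^{1-n},\ -q^{n+1}\\ 0\end{matrix};\,q^2,\,q^2\right].
\]
Reading $q^{1-n}$ as $(q^2)^{-(n-1)/2}$ and invoking the $q$-Chu--Vandermonde summation \cite[Appendix (II.7)]{GR} in the degenerate case where the lower parameter is $0$, i.e.\ ${}_2\phi_1(Q^{-m},b;0;Q,Q)=b^m$, the right-hand side equals $(-q^{n+1})^{(n-1)/2}=(-1)^{(n-1)/2}q^{(n^2-1)/2}$, which is exactly the claimed value. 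Modulo $a+q^n$ I would instead set $a=-q^n$; then $(aq;q^2)_k=(-q^{n+1};q^2)_k$ and $(-q/a;q^2)_k=(q^{1-n};q^2)_k$, so the specialized sum is literally the same one and evaluates to the same value. By coprimality this proves \eqref{eq:Tauraso-2}.

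The congruence \eqref{eq:Tauraso-22} is treated identically, with $-q^{n+1}$ replaced throughout by $q^{n+1}$: setting $a=q^{-n}$ (respectively $a=q^n$) truncates its left-hand side to ${}_2\phi_1(q^{1-n},q^{n+1};0;q^2,q^2)$, which the same special case of $q$-Chu--Vandermonde evaluates to $(q^{n+1})^{(n-1)/2}=q^{(n^2-1)/2}$, and coprimality of $1-aq^n$ and $a-q^n$ completes the argument. I do not expect a genuine obstacle here; the only points needing care are the base change $q\mapsto q^2$ (so that $q^{1-n}=(q^2)^{-(n-1)/2}$ and the sum really terminates at $(n-1)/2$) and the legitimacy of letting the lower ${}_2\phi_1$ parameter tend to $0$ in $q$-Chu--Vandermonde, which is harmless since $(0;q)_k=1$ and $(c/b;q)_m/(c;q)_m\to1$ as $c\to0$. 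The case $n=1$ is trivial, both sides being~$1$.
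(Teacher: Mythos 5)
Your proposal is correct and follows essentially the same route as the paper: specialize $a=q^{-n}$ and $a=\pm q^n$ so the sum terminates at $k=(n-1)/2$, evaluate the resulting terminating series by the $z=q$ form of the $q$-Chu--Vandermonde summation (the paper quotes \cite[(II.6)]{GR} directly, absorbing the lower parameter $c\to 0$ exactly as you do), and conclude by coprimality of the two linear factors in $a$.
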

\begin{proof} When $a=q^{-n}$ or $a=-q^{n}$, the left-hand side of \eqref{eq:Tauraso-2} is equal to
\begin{align*}
\sum_{k=0}^{(n-1)/2}\frac{(q^{1-n};q^2)_k (-q^{n+1};q^2)_k q^{2k}}{(q^2;q^2)_k}=(-1)^{(n-1)/2} q^{(n^2-1)/2}
\end{align*}
by the $q$-Chu-Vandermonde summation formula (see \cite[Appendix (II.6)]{GR}):
\begin{align*}
{}_3\phi_2\left[\begin{matrix}a,\,q^{-n}\\c\end{matrix}\,;\,q,\,q\right]=\frac{(c/a;q)_n }{(c;q)_n }a^n.
\end{align*}
This proves \eqref{eq:Tauraso-2}. Similarly, we can prove \eqref{eq:Tauraso-22}.
\end{proof}

Note that, the $a\to 1$ case of \eqref{eq:Tauraso-22} gives
$$
\sum_{k=0}^{n-1}\frac{(q;q^2)_k ^2 q^{2k}}{(q^2;q^2)_k} \equiv q^{(n^2-1)/2}\pmod{\Phi_n(q)^2}.
$$
But this congruence gives nothing when $n$ is an odd prime and  $q\to 1$.

\vskip 5mm \noindent{\bf Acknowledgments.}
The author would like to thank the anonymous referee for a careful reading and many helpful comments.
This work was partially
supported by the National Natural Science Foundation of China (grant 11771175).

\end{document}